\newtheorem{theorem}{Theorem}[section]
\newtheorem{lemma}{Lemma}[section]
\newtheorem{rmk}{Remark}[section]
\newcounter{theor}
\def\R{\mathbb{R}}
\def\N{\mathbb{N}}
\def\Z{\mathbb{Z}}
\def\vol{\mathrm{Vol}}
\def\E{\mathbb{E}}
\def\Var{\text{Var}}
\def\var{\text{Var}}
\def\cov{\text{Cov}}
\def\dist{\text{dist}}
\def\2Z{2^{-m}\Z^n}
\numberwithin{equation}{section}
\begin{document}

\title[CLT for distance]{Limit theorems for the distance \\of random points in $l_p^n$-balls}
\author{David Alonso-Guti\'errez, Javier Mart\'in Go\~ni, Joscha Prochno}
\address{University of Passau. Faculty of Computer Sciences and Mathematics. Innstrasse 41, 94032 Passau, Germany.}
\email{j.martin@unizar.es, javier.martingoni@uni-passau.de}
\thanks{The first named author is partially supported by MICINN project PID2022-137294NB-I00 and DGA project E48\_23R. The second and thirsd authors are supported by the DFG project 516672205}
\maketitle

\begin{abstract}
In this paper, we prove that the Euclidean distance between two independent random vectors uniformly distributed on $l_p^n$-balls ($1 \leq p \leq \infty$) or on its boundary satisfies a central limit theorem as $n\to\infty$. Also, we give a compact proof of the case of the sphere, which was proved by Hammersley in \cite{H}. Furthermore, we complement our central limit theorem by providing large deviation principles for the cases $p \geq 2$.
\end{abstract}


 \section{Introduction, notation and results.}
 \label{section1}

Recent developments in Asymptotic Geometric Analysis have significantly deepened our understanding of high-dimensional spaces. This interdisciplinary field, which merges techniques from analysis, geometry, and probability, has unveiled striking regularities and patterns in geometric structures as the dimension grows. In particular, the behavior of random objects in high dimensions has been a central focus, revealing connections to classical probabilistic phenomena.

One prominent area of study concerns central limit theorems (CLT) in high-dimensional geometry. Early works by Poincar\'e and Borel (see \cite{DF} or \cite{St} for modern expositions) demonstrated that as the dimension of Euclidean spaces increases, the distribution of random points in high-dimensional spheres or balls tends to converge to a Gaussian distribution. This convergence has since been extended to more general convex bodies, notably through Klartag's CLT \cite{K,K2}, which establishes that projections of random vectors on high-dimensional convex sets exhibit Gaussian-like behavior.

Further advances in the field have focused on understanding the probabilistic structure of high-dimensional bodies, such as cubes and  $l_p^n$-balls. The work of Paouris, Pivovarov, and Zinn \cite{PPZ} has contributed important results in this direction, providing a central limit theorem for the volume of $k$-dimensional random projections ($k$ fixed) of the $n$-dimensional cube as the space dimension $n$ tends to infinity. Prochno, Th\"ale and Tuchel \cite{PTT} generalized and complemented this considerably by establishing a central limit theorem and moderate and large deviation principles for the volume of random projections and sections of $l_p^n$-balls. Moreover, there is a central limit theorem for the volume of convex hulls of Gaussian random vectors obtained by B\'ar\'any and Vu \cite{VB} or Reitzner's central limit theorems for the volume and the number of $i$-dimensional faces of random polytopes in smooth convex bodies \cite{R}.

Beyond these normal fluctuations, there has been growing interest in understanding large deviations in high-dimensional spaces. These deviations capture rare events where the behavior of random vectors significantly differs from typical Gaussian outcomes, highlighting non-universal features of different convex bodies. In particular, the study of large deviation principles (LDPs) has provided critical insights into these rare events, offering a more complete picture of the probabilistic behavior of random vectors in convex bodies; we refer the interested reader to \cite{P} and the references cited therein.

The aim of this paper is to investigate CLTs and LDPs related to the Euclidean distance between random vectors in $l_p^n$-balls, to uncover deeper connections between geometric properties and probabilistic phenomena in high-dimensional settings. This problem was first researched by Hammersley in \cite{H}, where he proved an initial CLT for the case of the hypersphere. Further, it was extended to spherical distributions by Lord in \cite{L}. In this paper, we extend those results to $l_p^n$-balls and their respective boundaries. Furthermore, we obtain LDPs for the regime $2\leq p \leq \infty$.

 
We start with the central limit theorems and then present the corresponding large deviations counterparts. For $p \in [1, \infty ]$ and $x = (x_1, \ldots , x_n) \in \R^n$, $n\in \N$, we recall the definition of the $\Vert \cdot \Vert _p$-norm as 
\[   
\Vert x \Vert _p := 
     \begin{cases}
       \left( \sum _{i=1} ^n |x_i|^p \right)^{1/p} &\quad : p < \infty\\
       \max _{1 \leq i \leq n} |x_i| &\quad : p = \infty . \\
     \end{cases}
\]
We denote the unit ball in $\R^n$ with respect to the $\Vert \cdot \Vert _p$-norm by $B_p^n$, that is, 
\begin{align*}
B_p^n := \lbrace x \in \R^n \; \Vert x \Vert _p \leq 1 \rbrace,
\end{align*}
and we write $\partial B_p^n$ for its boundary. We write $X \sim \text{Unif} ( B_p^n)$ or $X \sim \text{Unif} ( \partial B_p^n)$ to denote that $X$ is a random vector uniformly distributed on $B_p^n$ or on $ \partial B_p^n$, with respect to the cone measure (see Section \ref{Section2.2}), respectively.

\subsection{Central limit theorems}

A cornerstone in probability theory is the central limit theorem. A sequence $(X_n)_{n \in \N}$
of random variables is said to satisfy the central limit theorem if 
\begin{align*}
\frac{X_n - \mathbb{E} X_n}{\sqrt{\Var X_n}} \overset{d}{\rightarrow} N(0,1),
\end{align*} 
where $\overset{d}{\rightarrow}$ denotes convergence in distribution as $n\to\infty$. That is, for every $t \in \R$,
\begin{align*}
\lim _{n \to \infty} \mathbb{P} \left( \frac{X_n - \mathbb{E} X_n}{\sqrt{\Var X_n}} \leq t \right) - \Phi (t) = 0,
\end{align*}
where $\Phi (t)$ is the distribution function of the standard normal distribution.
More generally, we say that a sequence of random variables $(Y_n)_{n \in \N}$ satisfies a CLT if $Y_n \overset{d}{\to} N(0, \sigma ^2)$, for some $\sigma > 0$.

Let us introduce the followings quantities 
\begin{align*}
M_p(\alpha) := \frac{\Gamma \left( (  \alpha + 1) / p \right) }{ \Gamma (1 / p )} ,
\end{align*}
where $\alpha >0$ and $p \in [1, \infty ]$.

In this paper, we are going to prove the following theorems. This first theorem establishes a central limit theorem for the distance between $X^{(n)}$ and $Y^{(n)}$, which are random vectors uniformly distributed in $l_p^n$-balls or on their boundary.

\begin{theorem}
\label{Thm:CLTforlpballs}
Let $1\leq p <\infty$, and let $X^{(n)},Y^{(n)} \sim \text{Unif}( \partial B_p^n)$ or $ X^{(n)},Y^{(n)} \sim \text{Unif}(B_p^n)$ be independent random vectors. Then,
\begin{align*}
\sqrt{n} \left( n^{1/p - 1/2} \Vert X^{(n)} - Y^{(n)} \Vert _2 -  \sqrt{2} \sqrt{\frac{\Gamma (3/p)}{\Gamma(1/p)}} p^{1/p}  \right) \overset{d}{\longrightarrow} N(0, \sigma_p ^2),
\end{align*}
where $\sigma_p ^2>0$ is given by
\begin{align*}
\sigma_p ^2 = \frac{p^{2/p} \left( M_p(4) + ((M_p(2))^2 \right)}{4 M_p(2)}.
\end{align*}
\end{theorem}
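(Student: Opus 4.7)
The plan is to combine the Schechtman--Zinn probabilistic representation of $X^{(n)},Y^{(n)}$ with a joint multivariate CLT and two applications of the delta method. Let $g_1,\ldots,g_n,h_1,\ldots,h_n$ be i.i.d.\ copies of the $p$-generalized Gaussian (with density proportional to $e^{-|t|^p/p}$, normalized so that $\E|g_1|^p=1$), and set $G=(g_1,\ldots,g_n)$, $H=(h_1,\ldots,h_n)$. Schechtman--Zinn then gives $X^{(n)}\stackrel{d}{=}G/\|G\|_p$ in the boundary case and $X^{(n)}\stackrel{d}{=}U^{1/n}G/\|G\|_p$ with independent $U\sim\text{Unif}(0,1)$ in the ball case, and analogously for $Y^{(n)}$. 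Introducing the empirical averages
\begin{equation*}
U_n=\tfrac{1}{n}\|G\|_2^2,\quad A_n=\tfrac{1}{n}\|G\|_p^p,\quad V_n=\tfrac{1}{n}\|H\|_2^2,\quad B_n=\tfrac{1}{n}\|H\|_p^p,\quad W_n=\tfrac{1}{n}\langle G,H\rangle,
\end{equation*}
one checks that
\begin{equation*}
Z_n\;:=\;n^{2/p-1}\|X^{(n)}-Y^{(n)}\|_2^2\;=\;\frac{U_n}{A_n^{2/p}}+\frac{V_n}{B_n^{2/p}}-\frac{2W_n}{A_n^{1/p}B_n^{1/p}}\;=:\;F(U_n,A_n,V_n,B_n,W_n).
\end{equation*}

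The first main step is a joint multivariate CLT for this five-dimensional vector. Applied to the i.i.d.\ vectors $(|g_i|^2,|g_i|^p)$ and independently $(|h_i|^2,|h_i|^p)$, it yields asymptotic normality of $(U_n,A_n)$ and $(V_n,B_n)$ around $(\mu_2,1)$, where $\mu_2:=\E|g_1|^2=p^{2/p}M_p(2)$. Since $\E(g_ih_i)=0$ and all cross-moments $\E(g_ih_i\cdot|g_i|^k)=\E(g_i|g_i|^k)\,\E h_i$ vanish by symmetry, $W_n$ is uncorrelated with the other four averages, so one obtains
\begin{equation*}
\sqrt n\bigl((U_n,A_n,V_n,B_n,W_n)-(\mu_2,1,\mu_2,1,0)\bigr)\overset{d}{\longrightarrow}N(0,\Sigma),
\end{equation*}
with $\Sigma$ block-diagonal in the three groups $\{U_n,A_n\}$, $\{V_n,B_n\}$, $\{W_n\}$.

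The second main step is the delta method for $F$ at $(\mu_2,1,\mu_2,1,0)$. At this point $F=2\mu_2=c_p^2$, where $c_p=\sqrt 2\,p^{1/p}\sqrt{M_p(2)}$ is the centering in the theorem, and a direct computation gives $\nabla F=(1,-2\mu_2/p,1,-2\mu_2/p,-2)$. Combined with the moment identities $\E|g_1|^4=p^{4/p}M_p(4)$, $\var|g_1|^p=p$ (using $M_p(2p)=(p+1)/p^2$), and $\cov(|g_1|^2,|g_1|^p)=\E|g_1|^{p+2}-\mu_2=2\mu_2$ (using $M_p(p+2)=\tfrac{3}{p}M_p(2)$), this produces $\sqrt n(Z_n-c_p^2)\overset{d}{\longrightarrow}N(0,\tau_p^2)$, and a further delta step for $z\mapsto\sqrt z$ at $z=c_p^2$ converts this into the claimed CLT for the distance with $\sigma_p^2=\tau_p^2/(4c_p^2)$. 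The ball case is handled by the observation $n(U^{1/n}-1)\to\log U$ in distribution, whence $\sqrt n(U^{1/n}-1)\to 0$ in probability and the extra radial factors contribute nothing at the $\sqrt n$-scale. The main technical obstacle will be the variance bookkeeping: $\tau_p^2$ arises as $2\var(\tilde U-(2\mu_2/p)\tilde S)+4\mu_2^2$ with $(\tilde U,\tilde S)$ having non-trivial correlation inherited from $\cov(|g|^2,|g|^p)$, and collapsing this to the stated closed form for $\sigma_p^2$ requires systematic application of the three moment identities above.
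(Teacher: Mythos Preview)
Your approach is sound and takes a genuinely different route from the paper's. The paper first proves an asymptotic-equivalence lemma (Lemma~\ref{LemEquivalentCLTBoundarypball}) replacing $n^{1/p-1/2}\bigl\|\,G/\|G\|_p - G'/\|G'\|_p\,\bigr\|_2$ by $n^{1/p-1/2}\|G-G'\|_2/\|G\|_p$, and then runs a \emph{two}-dimensional CLT and delta method on the pair $\bigl(\tfrac{1}{n}\sum_i|g_i-g_i'|^2,\,\tfrac{1}{n}\sum_i|g_i|^p\bigr)$. Your direct expansion via the five averages $(U_n,A_n,V_n,B_n,W_n)$ bypasses this reduction entirely, trading a larger covariance matrix for an argument that never needs to control the $\sqrt n$-scaled difference between the true quantity and a surrogate.

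There is, however, one step in your plan that will not go through: the final ``collapse to the stated closed form.'' Carrying your computation through with the moment identities you list (in particular $\var|g_1|^p=p$, $\cov(|g_1|^2,|g_1|^p)=2\mu_2$, and $\var(g_1h_1)=\mu_2^2$) gives
\[
\sigma_p^2\;=\;\frac{\tau_p^2}{4c_p^2}\;=\;\frac{p^{2/p}\bigl(M_p(4)+(1-\tfrac{4}{p})\,M_p(2)^2\bigr)}{4\,M_p(2)},
\]
which differs from the paper's stated value by an extra $-\tfrac{4}{p}M_p(2)^2$ in the numerator. Your answer is in fact the correct one: at $p=2$ it gives $\sigma_2^2=\tfrac12$, in agreement with the paper's independently proved Theorem~\ref{Thm:CLThypersphere}, whereas the stated formula gives $\sigma_2^2=1$. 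The discrepancy traces back to the paper's reduction lemma, which only shows the surrogate differs from the true quantity by $o_p(1)$; since $\sqrt n\,\bigl(\|G\|_p/\|G'\|_p-1\bigr)$ has a nondegenerate limit, that replacement drops a $\sqrt n$-scale contribution which your five-variable delta method correctly retains. (The two formulas coincide as $p\to\infty$, so Theorem~\ref{Thm:CLTforCube} is unaffected.) So do not expect your bookkeeping to recover the stated $\sigma_p^2$; trust your computation instead.
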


Note that, due to the nature of central limit theorems, the same CLT holds when we consider $X^{(n)}, Y^{(n)}$ to be uniformly distributed on $l_p^n$-balls or on their boundary. Independently, we obtain a similar result for the $n$-dimensional cube and its boundary. Moreover, taking the limit as $p \to \infty$ in Theorem \ref{Thm:CLTforlpballs}, it recovers the following result.

\begin{theorem}
\label{Thm:CLTforCube}
Let $X^{(n)},Y^{(n)} \sim \text{Unif}( \partial B_\infty ^n)$ or $X^{(n)},Y^{(n)} \sim \text{Unif}(B_\infty ^n)$. Then,
\begin{align*}
\sqrt{n} \left( n^{-1/2}  \Vert X^{(n)} - Y^{(n)} \Vert _2 - \sqrt{\frac{2}{3}}  \right) \overset{d}{\longrightarrow} N \left( 0, \frac{7}{30} \right).
\end{align*}
\end{theorem}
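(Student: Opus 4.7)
The plan is to bypass the $l_p$-machinery of Theorem~\ref{Thm:CLTforlpballs} and argue directly, exploiting the fact that a uniform random vector on the cube has independent coordinates. For $X^{(n)},Y^{(n)}\sim\text{Unif}(B_\infty^n)$, the pairs $(X_i^{(n)},Y_i^{(n)})$ are i.i.d.\ with each marginal uniform on $[-1,1]$, so
\[
\Vert X^{(n)}-Y^{(n)}\Vert_2^2 \;=\; \sum_{i=1}^n Z_i, \qquad Z_i := (X_i^{(n)}-Y_i^{(n)})^2,
\]
is a sum of i.i.d.\ random variables. With $U,V$ i.i.d.\ uniform on $[-1,1]$, a direct moment computation gives $\mathbb{E}[Z_1]=2\,\Var(U)=2/3$ and, using $\mathbb{E}[U^4]=1/5$, $\mathbb{E}[U^2]=1/3$ together with symmetry,
\[
\mathbb{E}[(U-V)^4] = 2\cdot\tfrac{1}{5} + 6\cdot\tfrac{1}{9} = \tfrac{16}{15}, \qquad \Var(Z_1) = \tfrac{16}{15}-\tfrac{4}{9} = \tfrac{28}{45}.
\]

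I would then invoke the classical central limit theorem for i.i.d.\ sums to obtain
\[
\sqrt{n}\Bigl(\tfrac{1}{n}\textstyle\sum_{i=1}^n Z_i - \tfrac{2}{3}\Bigr) \overset{d}{\longrightarrow} N\!\left(0,\tfrac{28}{45}\right),
\]
and apply the delta method to $f(x)=\sqrt{x}$ at $x_0=2/3$, where $(f'(2/3))^2=3/8$. Since $n^{-1/2}\Vert X^{(n)}-Y^{(n)}\Vert_2 = \sqrt{n^{-1}\sum_i Z_i}$, this yields the announced limit with variance $(3/8)\cdot(28/45)=7/30$.

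For the boundary case $X^{(n)},Y^{(n)}\sim\text{Unif}(\partial B_\infty^n)$, the cone measure on $\partial B_\infty^n$ can be sampled by picking an index $I\in\{1,\ldots,n\}$ and a sign $\epsilon\in\{-1,+1\}$ uniformly and independently, setting the $I$-th coordinate equal to $\epsilon$, and taking the remaining $n-1$ coordinates i.i.d.\ uniform on $[-1,1]$. Letting $I_X,I_Y$ denote the special indices for $X^{(n)}$ and $Y^{(n)}$, I would decompose
\[
\Vert X^{(n)}-Y^{(n)}\Vert_2^2 \;=\; \sum_{i\notin\{I_X,I_Y\}} Z_i \;+\; R_n,
\]
where, conditionally on $(I_X,I_Y)$, the first sum consists of $n-|\{I_X,I_Y\}|$ i.i.d.\ copies of $(U-V)^2$, and the remainder $R_n$, coming from at most two special coordinates, is deterministically bounded by $8$. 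Because $R_n/\sqrt{n}\to 0$ and the (at most two) missing summands shift the normalized mean by $O(1/n)$, Slutsky's theorem reduces the boundary case to the ball case. The only point requiring care is this last perturbation argument, but since $R_n$ is uniformly bounded it presents no real obstacle; the main work of the proof is essentially bookkeeping rather than analysis.
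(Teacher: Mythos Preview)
Your proposal is correct and follows essentially the same approach as the paper: for the ball case you compute the same moments $\mathbb{E}[Z_1]=2/3$, $\Var(Z_1)=28/45$, apply the classical CLT and then the delta method with $f(x)=\sqrt{x}$; for the boundary case you use the same cone-measure sampling (one fixed $\pm1$ coordinate, the rest uniform) and a bounded-perturbation argument combined with Slutsky to reduce to the ball case. The only cosmetic difference is that the paper couples the boundary and ball vectors and bounds $|S_n-\tilde S_n|\le 16/n$ directly, whereas you split off the special coordinates into a remainder $R_n\le 8$; both arguments are equivalent.
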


\subsection{Large deviation principles}

Large deviation principles describe the asymptotic behavior of the probability of rare events for a sequence of random variables by characterizing their exponential decay rate in terms of the speed and a rate function (see section \ref{Subsection:LargeDeviations} for precise definitions).

We now present the main theorems on the scale of large deviations.

\begin{theorem}
\label{ThmLDPBoundarypgeq2}
Let $p\geq 2$, and let $X^{(n)} , Y^{(n)} \sim \text{Unif} ( \partial B_p^n)$ be independent random vectors. Then, the sequence $\left( \dist (X^{(n)} , Y^{(n)}) \right)_{n \in \N} := \left( n^{1/p - 1/2} \Vert X^{(n)} - Y^{(n)} \Vert _2 \right)_{n \in \N}$ satisfies an LDP with speed $n$ and good rate function
\[   
I_{\dist (X^{(n)} , Y^{(n)})}(z) = 
     \begin{cases}
       \displaystyle{\inf_{ \substack{x \geq 0, y > 0 \\ x^{1/2}y^{-1/p} = z}} \Lambda ^* (x,y)}  &\quad\text{: }z \geq 0\\
       + \infty &\quad\text{: }z < 0, \\ 
     \end{cases}
\]
where $ \Lambda^* $ is the Legendre-Fenchel transform of the function
\begin{align*}
\Lambda (t_1,t_2) & = \log \int _{-\infty} ^{\infty} \int _{-\infty} ^{\infty} e^{t_1 |x-y|^2 + t_2|y|^p    
  - \frac{|x|^p}{p} - \frac{|y|^p}{p} } \frac{1}{4p^{2/p} \left( \Gamma(1 + 1/p) \right)^2} dx dy .
\end{align*}
\end{theorem}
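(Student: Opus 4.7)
\medskip
\noindent The plan is to work with the Schechtman--Zinn probabilistic representation of the cone measure on $\partial B_p^n$: if $G=(g_1,\dots,g_n)$ and $H=(h_1,\dots,h_n)$ have i.i.d.\ entries with the generalized $p$-Gaussian density $\rho_p(x)=(2p^{1/p}\Gamma(1+1/p))^{-1}\e^{-|x|^p/p}$, then $G/\|G\|_p\sim\text{Unif}(\partial B_p^n)$, and similarly for $H$. Under this identification, the numerical prefactor in the definition of $\Lambda$ is exactly the joint density of $(g_1,h_1)$, so
\[
\Lambda(t_1,t_2)=\log\E\bigl[\e^{t_1|g_1-h_1|^2+t_2|h_1|^p}\bigr].
\]
Since $p\geq 2$, the $p$-th power in the negative exponent dominates the quadratic $t_1(x-y)^2$ at infinity, and $t_2|y|^p$ is absorbed into $-|y|^p/p$ for small $t_2$, so $\Lambda$ is finite on a neighbourhood of the origin.

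\medskip
\noindent Introduce the auxiliary i.i.d.\ sums
\[
U_n:=\tfrac{1}{n}\sum_{i=1}^n|g_i-h_i|^2,\qquad W_n:=\tfrac{1}{n}\sum_{i=1}^n|h_i|^p.
\]
Cram\'er's theorem in $\R^2$ yields an LDP for $(U_n,W_n)_{n\in\N}$ with speed $n$ and good rate function $\Lambda^*$. Applying the contraction principle to the continuous map $(u,w)\mapsto u^{1/2}w^{-1/p}$, which is well-defined on $\{w>0\}$ and whose domain contains the effective domain of $\Lambda^*$ because $\E|h_1|^p=1$, one obtains an LDP for the auxiliary quantity
\[
Z_n:=\frac{U_n^{1/2}}{W_n^{1/p}}=n^{1/p-1/2}\,\frac{\|G-H\|_2}{\|H\|_p}
\]
with the good rate function $I_{\dist}$ displayed in the statement.

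\medskip
\noindent The main obstacle, which I expect to absorb most of the technical work, is to show that $Z_n$ is exponentially equivalent to the true rescaled distance $D_n:=n^{1/p-1/2}\|X^{(n)}-Y^{(n)}\|_2$; by the standard transfer principle this gives $(D_n)$ the same LDP. The triangle inequality produces
\[
|D_n-Z_n|\leq n^{1/p-1/2}\|G\|_2\,\Bigl|\tfrac{1}{\|G\|_p}-\tfrac{1}{\|H\|_p}\Bigr|.
\]
For any $\delta>0$, Cram\'er concentration of $\|G\|_p^p/n$ and $\|H\|_p^p/n$ around $\E|g_1|^p=1$ gives $|\|G\|_p^{-1}-\|H\|_p^{-1}|\leq C(\delta)n^{-1/p}$ with $C(\delta)\to 0$, off an event of probability $\e^{-c(\delta)n}$. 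The assumption $p\geq 2$ enters crucially here: it makes the moment generating function of $g_1^2$ finite near the origin, so Cram\'er also gives $\|G\|_2\leq C'\sqrt n$ with exponentially high probability. Intersecting these events one obtains $|D_n-Z_n|\leq C'C(\delta)$ off an exponentially negligible set, and letting $\delta\to 0$ yields the exponential equivalence. For $p<2$ the required concentration of $\|G\|_2^2/n$ fails, which is precisely the reason the LDP is stated only for $p\geq 2$.
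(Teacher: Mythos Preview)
Your strategy coincides with the paper's: Schechtman--Zinn representation, Cram\'er's theorem in $\R^2$ for the pair $\bigl(\tfrac1n\sum_i|g_i-h_i|^2,\,\tfrac1n\sum_i|h_i|^p\bigr)$, the contraction principle with $(u,w)\mapsto u^{1/2}w^{-1/p}$, and finally exponential equivalence to transfer the LDP from the auxiliary $Z_n$ to the true distance $D_n$. The only cosmetic differences are that the paper normalizes by $\|G\|_p$ rather than $\|H\|_p$ (immaterial, since the integrand defining $\Lambda$ is symmetric in $x$ and $y$) and reaches its error bound via a mean-value argument on $x\mapsto\|G-xG'\|_2$ instead of the reverse triangle inequality.

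That said, the exponential-equivalence step in your sketch---and in the paper's corresponding lemma---does not close. Cram\'er concentration only gives $\mathbb{P}(|D_n-Z_n|>\epsilon)\le e^{-c\,n}$ for some \emph{finite} $c>0$, whereas exponential equivalence requires $\limsup_n n^{-1}\log\mathbb{P}(|D_n-Z_n|>\epsilon)=-\infty$. Your proposed remedy, shrinking the concentration window $\delta$ so that $C(\delta)\to 0$, simultaneously drives the rate $c(\delta)\to 0$, so no choice of parameters yields a super-exponential bound. In fact equivalence fails outright: for every $p\ge 2$ one has $D_n\le 2$ deterministically (since $\|x\|_2\le n^{1/2-1/p}$ for $x\in\partial B_p^n$), whereas on the event $\bigl\{\tfrac1n\sum_i|h_i|^p\approx\eta\bigr\}$, which has probability of order $e^{-c_\eta n}$ with $c_\eta<\infty$, one gets $Z_n$ of order $\eta^{-1/p}$. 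Hence $Z_n$ and $D_n$ cannot share an LDP at speed $n$, and the displayed rate function---which is the one for $Z_n$---is finite at values $z>2$ where the rate for $D_n$ must be infinite.
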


\begin{rmk}
Let us point out that $\dist (\cdot , \cdot)$ denotes a normalised distance rather than the standard Euclidean distance. Nonetheless, they coincide if $p=2$.
\end{rmk}

In contrast to the CLT, large deviation principles exhibit a more sensitive dependence on the underlying distribution. As shown in the following theorems, the LDPs for points uniformly distributed on the boundary and those in $l_p^n$-balls lead to different behaviors, reflecting the focus of large deviations on tail events rather than fluctuations around the mean.

\begin{theorem}
\label{ThmLDPpgeq2}
Let $p\geq 2$, and let $X^{(n)} , Y^{(n)} \sim \text{Unif} ( B_p^n)$ be independent. Then, the sequence $\left( \text{dist}(X^{(n)} , Y^{(n)}) \right)_{n \in \N} := \left( n^{1/p - 1/2} \Vert X^{(n)} - Y^{(n)} \Vert _2 \right)_{n \in \N}$ satisfies an LDP with speed $n$ and good rate function
\[   
I_{\dist(X^{(n)} , Y^{(n)})}(z) = 
     \begin{cases}
       \displaystyle{\inf_{ \substack{z_1 \geq 0, z_2 \geq 0 \\ z=z_1z_2}} I_V (z_1,z_2)}  &\quad\text{: }z \geq 0\\
       + \infty &\quad\text{: } z<0, \\ 
     \end{cases}
\]
where
\begin{align*}
I_V(z_1,z_2) := I_U(z_1) + I_W(z_2), \quad (z_1,z_2)\in \R^2,
\end{align*}
being
\[   
I_U(z) = 
     \begin{cases}
       \log (z) &\quad\text{: }z \in (0, 1]\\
       + \infty &\quad\text{: otherwise } \\ 
     \end{cases}
\]
and
\[   
I_W(z) = 
     \begin{cases}
       \displaystyle{\inf_{ \substack{x \geq 0, y > 0 \\ x^{1/2}y^{-1/p} = z}} \Lambda ^* (x,y)}  &\quad\text{: }z \geq 0\\
       + \infty &\quad\text{: }z < 0, \\ 
     \end{cases}
\]
where $ \Lambda^* $ is the Legendre-Fenchel transform of the function
\begin{align*}
\Lambda (t_1,t_2) & = \log \int _{-\infty} ^{\infty} \int _{-\infty} ^{\infty} e^{t_1 |x-y|^2 + t_2|y|^p    
  - \frac{|x|^p}{p} - \frac{|y|^p}{p} } \frac{1}{4p^{2/p} \left( \Gamma(1 + 1/p) \right)^2} dx dy .
\end{align*}
\end{theorem}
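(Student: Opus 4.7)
The plan is to reduce Theorem~\ref{ThmLDPpgeq2} to Theorem~\ref{ThmLDPBoundarypgeq2} using the Schechtman-Zinn polar representation of the uniform distribution on $B_p^n$. First, I would write
\[
X^{(n)} = U_1^{1/n}\tilde X^{(n)}, \qquad Y^{(n)} = U_2^{1/n}\tilde Y^{(n)},
\]
with $U_1, U_2 \sim \text{Unif}(0,1)$ i.i.d., $\tilde X^{(n)}, \tilde Y^{(n)} \sim \text{Unif}(\partial B_p^n)$ i.i.d., and all four mutually independent. This decouples the radial from the angular component. Since $\mathbb{P}(U^{1/n} \leq z) = z^n$ on $[0,1]$, the sequence $U^{1/n}$ satisfies an LDP at speed $n$ with good rate function $z\mapsto -\log z$ on $(0,1]$, exactly matching the $I_U$ of the statement.

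Theorem~\ref{ThmLDPBoundarypgeq2} then supplies the LDP at speed $n$ for the boundary distance $W_n := n^{1/p-1/2}\|\tilde X^{(n)} - \tilde Y^{(n)}\|_2$ with rate $I_W$. By independence of $(U_1, U_2)$ from $(\tilde X^{(n)}, \tilde Y^{(n)})$, standard arguments yield a joint LDP for $(U_1^{1/n}, U_2^{1/n}, W_n)$ with rate $-\log u_1 - \log u_2 + I_W(w)$. The target $\dist(X^{(n)}, Y^{(n)}) = n^{1/p-1/2}\|U_1^{1/n}\tilde X^{(n)} - U_2^{1/n}\tilde Y^{(n)}\|_2$ is a continuous functional of these variables together with the directional information from $\tilde X^{(n)}, \tilde Y^{(n)}$, and I would aim to establish, via the contraction principle (possibly after passing through the Schechtman-Zinn representation of $\tilde X^{(n)}, \tilde Y^{(n)}$ as normalized $p$-generalized Gaussian vectors and using Cram\'er's theorem on the underlying i.i.d.\ sample), that the resulting rate function collapses exactly to $I_{\dist}(z) = \inf_{z_1 z_2 = z}\{I_U(z_1) + I_W(z_2)\}$, i.e.\ the contraction of $I_V$ through the product map $(z_1, z_2) \mapsto z_1 z_2$.

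The main obstacle is carrying out this final contraction rigorously: unlike the boundary case, the distance $\|U_1^{1/n}\tilde X^{(n)} - U_2^{1/n}\tilde Y^{(n)}\|_2$ does not factor as a pure product unless $U_1 = U_2$, so a direct exponential equivalence with $U^{1/n}W_n$ is unavailable and one must argue at the level of large-deviation rates. The key observation that underpins the reduction is that the product $U_1^{1/n}U_2^{1/n}$ still has LDP rate $-\log z$ on $(0,1]$ (the convolution of two independent exponentials yields only subexponential corrections), so that the two independent radii together contribute only a single $I_U$-cost rather than two, which is exactly what $I_V$ reflects. To translate this into a proof, one would separate the LDP upper and lower bounds and, in the upper bound, carefully control the asymmetric cross contribution $|U_1^{1/n} - U_2^{1/n}|\,\|\tilde Y^{(n)}\|_2$ (and its symmetric counterpart) on the LDP scale; identifying the dominant symmetric configurations and verifying that the asymmetric ones cost no less in the infimum is the most technically demanding step.
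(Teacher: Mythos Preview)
Your plan---Schechtman--Zinn representation, LDP for the radial part $U^{1/n}$, LDP for the angular part via Theorem~\ref{ThmLDPBoundarypgeq2}, then combine by independence and contraction---is the paper's strategy. The divergence is precisely at the step you call the ``main obstacle''. You assert that a direct exponential equivalence between $\dist(X^{(n)},Y^{(n)})$ and a product of the form $U^{1/n}W_n$ is unavailable, and therefore propose to argue separately on the LDP upper and lower bounds while controlling ``asymmetric cross contributions''. The paper does the opposite: it establishes the exponential equivalence you dismissed, via Lemma~\ref{LemExpEquivalentpgeq2Boundary}.

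Concretely, after writing $X^{(n)}=U^{1/n}G/\Vert G\Vert_p$ and $Y^{(n)}=U'^{1/n}G'/\Vert G'\Vert_p$, one factors out the \emph{first} radius only:
\[
n^{1/p-1/2}\Vert X^{(n)}-Y^{(n)}\Vert_2 \;=\; U^{1/n}\cdot\frac{n^{1/p-1/2}}{\Vert G\Vert_p}\,\Vert G-A_nG'\Vert_2,
\qquad A_n=\frac{U'^{1/n}}{U^{1/n}}\,\frac{\Vert G\Vert_p}{\Vert G'\Vert_p}.
\]
Lemma~\ref{LemExpEquivalentpgeq2Boundary} (the mean value bound $\left|\,\Vert G-A_nG'\Vert_2-\Vert G-G'\Vert_2\,\right|\le|1-A_n|\,\Vert G'\Vert_2$ combined with Cram\'er-type tail estimates for $\Vert G\Vert_p^p/n$, $\Vert G'\Vert_2^2/n$, and $|1-A_n|$) asserts that the second factor is exponentially equivalent to $W_n:=n^{1/p-1/2}\Vert G-G'\Vert_2/\Vert G\Vert_p$. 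Since $U^{1/n}\le 1$, the full distance is then exponentially equivalent to the genuine product $U^{1/n}W_n$; and because $U^{1/n}$ depends only on $U$ while $W_n$ depends only on $(G,G')$, the two factors are independent. A joint LDP with rate $I_V=I_U+I_W$ and the contraction principle for $(z_1,z_2)\mapsto z_1z_2$ then finish the proof directly.

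The idea you are missing is that the second radius $U'^{1/n}$ is not retained as a separate variable but absorbed into $A_n$ and then eliminated by exponential equivalence; only one $I_U$ term survives, and no analysis of asymmetric configurations---nor your observation about the LDP of $U_1^{1/n}U_2^{1/n}$---is needed.
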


We now turn to the case of the $n$-dimensional cube, where we establish a large deviations principle for points uniformly distributed in the cube.

\begin{theorem}
\label{ThmLDPBoundaryCube}
Let $X^{(n)} , Y^{(n)} \sim \text{Unif} ( B_\infty^n)$ be independent random vectors. Then, the sequence $\left( \text{dist}(X^{(n)} , Y^{(n)}) \right)_{n \in \N} := \left( n^{ - 1/2} \Vert X^{(n)} - Y^{(n)} \Vert _2 \right)_{n \in \N}$ satisfies an LDP with speed $n$ and good rate function
\[   
I_{\text{dist}(X^{(n)} , Y^{(n)})}(z) = 
     \begin{cases}
       \displaystyle{\inf_{ \substack{x \geq 0 \\ z = \sqrt{x}}} \Lambda^* (x)}  &\quad\text{: }z \geq 0\\
       + \infty &\quad\text{: }z<0 , \\ 
     \end{cases}
\]
where $ \Lambda^* $ is the Legendre-Fenchel transform of the function
\begin{align*}
\Lambda (t) = \log \left[ 2 \int _{0} ^2 e^{t x^2 } \left( 1- \frac{x}{2} \right) dx \right] .
\end{align*}
\end{theorem}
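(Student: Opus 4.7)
\emph{Proof plan.}
Unlike for $B_p^n$ with $p<\infty$, where the cone-measure representation produces correlations that must be handled via a probabilistic representation, the uniform distribution on the cube $B_\infty^n=[-1,1]^n$ is a pure product measure, so the squared Euclidean distance $\|X^{(n)}-Y^{(n)}\|_2^2$ decomposes at the coordinate level into a sum of i.i.d.\ terms. The proof is therefore a direct application of Cram\'er's theorem followed by the contraction principle.

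Concretely, writing $X^{(n)}=(X_1,\ldots,X_n)$ and $Y^{(n)}=(Y_1,\ldots,Y_n)$, the coordinates $X_1,Y_1,\ldots,X_n,Y_n$ are i.i.d.\ uniform on $[-1,1]$. Setting $Z_i:=(X_i-Y_i)^2\in[0,4]$, the $Z_i$ are i.i.d.\ bounded random variables and
\[
\dist(X^{(n)},Y^{(n)})^2 \;=\; \frac{\|X^{(n)}-Y^{(n)}\|_2^2}{n} \;=\; \frac{1}{n}\sum_{i=1}^n Z_i.
\]
The difference $X_1-Y_1$ has the standard triangular density $\tfrac12(1-|u|/2)\mathds{1}_{[-2,2]}(u)$, so that after exploiting the evenness of the integrand, the logarithmic moment generating function of $Z_1$ takes precisely the form $\Lambda(t)$ displayed in the statement (up to the routine reduction of an integral on $[-2,2]$ to one on $[0,2]$). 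Since $Z_1$ is bounded, $\Lambda(t)<\infty$ for all $t\in\R$; in particular the origin lies in the interior of the effective domain, so Cram\'er's theorem on $\R$ applies without any auxiliary verification and yields an LDP for $\tfrac1n\sum_{i=1}^n Z_i$ at speed $n$ with good rate function $\Lambda^*$.

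It remains to push this LDP through the continuous map $\varphi\colon[0,\infty)\to[0,\infty)$, $\varphi(x)=\sqrt{x}$. Since $\tfrac1n\sum_{i=1}^n Z_i$ takes values almost surely in the compact set $[0,4]$, the contraction principle applies and yields an LDP for $\dist(X^{(n)},Y^{(n)})=\varphi\bigl(\tfrac1n\sum_{i=1}^n Z_i\bigr)$ at speed $n$ with good rate function
\[
I(z)\;=\;\inf\bigl\{\Lambda^*(x)\,:\,x\geq 0,\ \sqrt{x}=z\bigr\}
\]
for $z\geq 0$, and $I(z)=+\infty$ for $z<0$, matching the statement. No genuine obstacle is expected here; the only step requiring a short calculation is the explicit identification of $\Lambda$ from the triangular density, and the rest is a textbook combination of Cram\'er plus contraction, made especially clean by the fact that the cube case involves no conditioning or probabilistic representation.
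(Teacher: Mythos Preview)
Your proposal is correct and follows essentially the same approach as the paper: write $\dist(X^{(n)},Y^{(n)})^2$ as an average of i.i.d.\ squared coordinate differences, apply Cram\'er's theorem to obtain an LDP with rate $\Lambda^*$, and then push this through the square-root map via the contraction principle. The paper's proof is slightly terser (it does not spell out the triangular density or the boundedness check), but the structure is identical.
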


The rest of this paper is structured as follows. Some notation and results needed for the main results are collected in Section \ref{section2}. In Section \ref{section3}, we introduce the problem considered and prove the spherical case following the idea of \cite{H} in a more direct way.  Section \ref{section4} contains the proofs of the central limit theorems (Theorems \ref{Thm:CLTforlpballs} and \ref{Thm:CLTforCube}) and Section \ref{section5} contains the proofs of the large deviation principles (Theorems \ref{ThmLDPBoundarypgeq2}, \ref{ThmLDPpgeq2} and \ref{ThmLDPBoundaryCube}).

 \section{Notation and preliminaries}
 \label{section2}

\subsection{General notation}

Let $A \subset \mathbb{X}$ be a subset of some topological space $\mathbb{X}$, we denote its interior as $A^\circ$ and its closure as $\overline{A}$. Given a Borel probability measure $\tau$ on $\R^n$, we indicate by $X \sim \tau$ that the random vector $X$ has distribution $\tau$. In particular, we denote as $X \sim \mathcal{N}( \mu , \sigma ^2 )$ that the random variable $X$ has a Gaussian distribution with mean $\mu \in \R$ and variance $\sigma ^2 > 0$. In dimension $n \geq 2$, we write $\mathcal{N} ( \mu , \Sigma)$ to indicate the multivariate Gaussian distribution with mean $\mu \in \R^n$ and covariance matrix $\Sigma \in \R^{n \times n} $. Given $(X_n)_{n \in \N}$ a sequence of random vectors in $\R^n$, we write $ \overset{d}{\to}$, $ \overset{p}{\to}$ and $ \overset{a.s.}{\to}$ to indicate convergence in distribution, probability and almost sure, respectively. Moreover, given two random elements $X_1$ and $X_2$, we indicate equality in distribution as $X_1 \overset{d}{=} X_2$. Given two sequences of random vectors $(X_n)_{n \in \N}$, $(Y_n)_{n \in \N}$, we denote asymptotic equivalence in distribution, i.e., $\Vert X_n - Y_n \Vert _2 \overset{p}{\to} 0$, as $X_n \overset{d}{\sim} X_n$. We write $\E$ and $\var$ for the expected value and variance, respectively.

\subsection{Probabilistic aspects of $l_p^n$-balls}
\label{Section2.2}

Recall that for $p \in [1, \infty ]$ and $x = (x_1, ... , x_n) \in \R^n$, $n \in \N$, we write $\Vert x \Vert _p$ for the $p$-norm of $x$. We denote the unit ball in $\R^n$ with respect to the $\Vert \cdot \Vert _p$-norm by $B_p^n$, that is 
\begin{align*}
B_p^n := \lbrace x \in \R^n \; \Vert x \Vert _p \leq 1 \rbrace,
\end{align*}
and we write $\partial B_p^n$ for its boundary. Additionally, we will write $S^{n-1}:= \partial B_2^n$ to denote the Euclidean sphere.

Let $\sigma _p ^n$ be the surface measure (Hausdorff measure) on $\partial B_p^n$, $p \geq 1$, and denote by $\mu_p ^n$ the cone probability measure on $\partial B_p^n$, defined by
\begin{align*}
\mu_p ^n (A) = \frac{1}{\vol (A)} \vol \left( \lbrace ta \in \R^n ; a \in A, 0\leq t \leq 1 \rbrace \right), \; \; \; \;  A \subset \partial B_p^n,
\end{align*}
where $\vol$ denotes the Lebesgue measure.

Recall that a random variable is said to be $p-$generalized Gaussian ($1\leq p < \infty$) if its density function $f_p$ is given by
\begin{align*}
f_p(x) = C_p^{-1} e^{- |x|^p / p}, \; \; \; x \in \R,
\end{align*}
with respect to the Lebesgue measure on $\R$, where the normalization constant $C_p$ is given by $C_p:= 2 p^{1/p} \Gamma \left(1 + \frac{1}{p} \right)$. For the case $p=\infty$, we define the $\infty$-generalized Gaussian as a random variable uniformly distributed on $[-1,1]$. Notice that, if $g$ is a $p$-generalized Gaussian for some $p \in [1, \infty)$, then for every $\alpha \geq 0$, $\E | g |^\alpha = M_p(\alpha )$.

We recall from \cite{SZ} the following probabilistic representation for a uniformly distributed random vector in $B_p^n$ and in $\partial B_p^n$.

\begin{lemma}[Schechtman and Zinn]
\label{Lem:SchAndZinn}
Fix $1\leq p < \infty$. Let $Y = (Y_1, ... , Y_n)$ be a random vector with independent $p$-generalized Gaussian coordinates $Y_1,...,Y_n$, and let $U$ be a random variable uniformly distributed on $[0,1]$, independent of $Y$. Then,

\begin{itemize}

\item The random vector $\displaystyle{\frac{Y}{\Vert Y \Vert _p}}$ and the random varianble $\Vert Y \Vert _p$ are independent.

\item $\displaystyle{\frac{Y}{\Vert Y \Vert _p}}$ is uniformly distributed on $\partial B_p^n$ according to the cone measure $\mu _p^n$.

\item $\displaystyle{U^{1/n} \frac{Y}{\Vert Y \Vert _p}}$ is uniformly distributed in $B_p^n$.

\end{itemize}
\end{lemma}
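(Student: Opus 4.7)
The plan is to reduce both halves of the lemma to a single polar-type decomposition of Lebesgue measure adapted to the $p$-norm. The key identity, which is essentially built into the definition of the cone measure $\mu_p^n$, is
$$\int_{\R^n} f(y)\,dy \;=\; n\,\vol(B_p^n) \int_0^\infty\! \int_{\partial B_p^n} f(r\theta)\, r^{n-1} \,d\mu_p^n(\theta)\,dr$$
for every nonnegative Borel $f$. To establish it, I would check it first on indicator functions of truncated cones $\{t\theta : 0 \leq t \leq s,\ \theta \in A\}$ with $A \subseteq \partial B_p^n$: by the defining formula of $\mu_p^n$ together with the $n$-homogeneity of Lebesgue measure, both sides reduce to $s^n\,\vol(B_p^n)\,\mu_p^n(A)$. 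A monotone class argument then extends it to all $f$, and this is the only conceptually nontrivial step.

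With the decomposition in hand, the density of $Y$ is $f_Y(y) = C_p^{-n}\,e^{-\Vert y\Vert_p^p/p}$, which depends on $y$ only through $\Vert y\Vert_p$. Applying the polar formula to $f(y) = g(y/\Vert y\Vert_p)\,h(\Vert y\Vert_p)\,f_Y(y)$ and separating the radial and angular integrations yields
$$\E\!\left[g\!\left(\tfrac{Y}{\Vert Y\Vert_p}\right) h(\Vert Y\Vert_p)\right] \;=\; \left(\int_{\partial B_p^n}\! g\,d\mu_p^n\right)\!\left(\frac{n\,\vol(B_p^n)}{C_p^n} \int_0^\infty\! h(r)\, r^{n-1} e^{-r^p/p}\,dr\right)\!.$$
The product structure of the right-hand side proves simultaneously that $Y/\Vert Y\Vert_p$ and $\Vert Y\Vert_p$ are independent, and that the law of $Y/\Vert Y\Vert_p$ is precisely the cone measure $\mu_p^n$ (normalisation is checked by taking $g\equiv 1$ and $h\equiv 1$ so that the two radial and angular factors must each equal $1$).

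For the third bullet, set $Z := U^{1/n}\, Y/\Vert Y\Vert_p$. Using independence of $U$ from $Y/\Vert Y\Vert_p$ (whose law is now known to be $\mu_p^n$) and then the substitution $r = u^{1/n}$, I would compute
$$\E[g(Z)] \;=\; \int_0^1\! \int_{\partial B_p^n}\! g(u^{1/n}\theta)\,d\mu_p^n(\theta)\,du \;=\; n\!\int_0^1\! \int_{\partial B_p^n}\! g(r\theta)\, r^{n-1}\,d\mu_p^n(\theta)\,dr.$$
Invoking the polar identity once more (applied to $g\cdot \mathbf{1}_{B_p^n}$, so that only $r \in [0,1]$ contributes) collapses this to $\vol(B_p^n)^{-1}\!\int_{B_p^n} g(y)\,dy$, which is exactly the statement $Z \sim \mathrm{Unif}(B_p^n)$. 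The main obstacle is thus setting up the polar decomposition cleanly from the cone-measure definition; once that is in place, all three assertions follow by straightforward integration with no additional input.
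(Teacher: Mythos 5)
The paper does not prove this lemma; it is quoted verbatim from Schechtman--Zinn \cite{SZ} as background, so there is no in-paper proof to compare against. Your proof is correct and is essentially the standard argument from the literature: the $p$-adapted polar decomposition of Lebesgue measure is exactly the identity that makes the cone measure the ``right'' surface measure for this problem, and once it is checked on truncated cones (where it is immediate from the $n$-homogeneity of Lebesgue measure and the definition $\mu_p^n(A)=\vol(\{ta:a\in A,\,0\le t\le 1\})/\vol(B_p^n)$ --- note the paper's displayed definition has a typo, $\vol(A)$ in the denominator should read $\vol(B_p^n)$), the first two bullets follow from the factorization of the integral because the Gaussian-type density $C_p^{-n}e^{-\|y\|_p^p/p}$ depends only on $\|y\|_p$. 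Your normalization check (that both factors are $1$) does tacitly use that $\mu_p^n$ is already a probability measure, which is true by definition, but it is worth saying explicitly so that the conclusion ``the angular factor is $\int g\,d\mu_p^n$'' follows cleanly. The treatment of the third bullet via the substitution $r=u^{1/n}$ and a second application of the polar identity on $[0,1]\times\partial B_p^n$ is correct. In short, no gaps; this is the argument one would find in a textbook treatment of the Schechtman--Zinn representation.
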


\subsection{Delta Method} 
We will use the following technical lemma, which characterizes the asymptotic distribution of a differentiable function of a random vector which is asymptotically Gaussian (see \cite{O}).

\begin{lemma}[Delta method]
\label{DeltaMethod}
Let $(X_n)_{n\in\N}$ be a sequence of $k-$dimensional random vectors such that $\sqrt{n}(X_n - \mu)$ converges in distribution to a centered Gaussian random vector with covariance matrix $\Sigma$, and let $g:\mathbb{R}^k \to \R^d$ be continuously differentiable at $\mu$ with Jacobian matrix $J_g$. Then,
\begin{align*}
\sqrt{n} (g(X_n) - g(\mu) ) \overset{d}{\longrightarrow} N\left( 0 , J_g \Sigma J_g^T \right)
\end{align*}
provided $J_g \Sigma J_g^T$ is positive definite.
\end{lemma}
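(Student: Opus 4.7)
The plan is the classical Taylor expansion plus Slutsky argument. First I would note that the assumption $\sqrt{n}(X_n-\mu)\overset{d}{\to} Z$ for $Z\sim \mathcal{N}(0,\Sigma)$ forces the sequence $\sqrt{n}(X_n-\mu)$ to be tight, which in particular implies $X_n\overset{p}{\to}\mu$. Since $g$ is continuously differentiable at $\mu$, I would write the first-order Taylor expansion
\begin{align*}
g(X_n)-g(\mu)=J_g(\mu)(X_n-\mu)+R_n,
\end{align*}
where the remainder obeys $\Vert R_n\Vert/\Vert X_n-\mu\Vert\to 0$ whenever $X_n\to\mu$ (with the convention that this ratio is zero when $X_n=\mu$). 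Multiplying by $\sqrt{n}$ yields the decomposition
\begin{align*}
\sqrt{n}(g(X_n)-g(\mu))=J_g(\mu)\,\sqrt{n}(X_n-\mu)+\sqrt{n}\,R_n.
\end{align*}

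For the linear summand, the continuous mapping theorem applied to the (continuous) linear map $z\mapsto J_g(\mu)z$ gives $J_g(\mu)\,\sqrt{n}(X_n-\mu)\overset{d}{\to} J_g(\mu)Z$, which is a centered Gaussian random vector with covariance matrix $J_g\Sigma J_g^T$.

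The only nontrivial step is to establish $\sqrt{n}\,R_n\overset{p}{\to}0$. To this end, I would factor
\begin{align*}
\Vert \sqrt{n}\,R_n\Vert=\Vert\sqrt{n}(X_n-\mu)\Vert\cdot\frac{\Vert R_n\Vert}{\Vert X_n-\mu\Vert},
\end{align*}
so that the first factor is stochastically bounded (it converges in distribution to $\Vert Z\Vert$), while the second factor converges to zero in probability because $X_n\overset{p}{\to}\mu$ and the Taylor remainder is $o(\Vert X_n-\mu\Vert)$; a tight sequence multiplied by an $o_p(1)$ sequence is itself $o_p(1)$. Slutsky's theorem then combines the two pieces to produce $\sqrt{n}(g(X_n)-g(\mu))\overset{d}{\to} \mathcal{N}(0,J_g\Sigma J_g^T)$. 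The positive-definiteness assumption on $J_g\Sigma J_g^T$ is used only to guarantee that the limiting Gaussian is non-degenerate on $\R^d$; it plays no role in the convergence itself. The main technical obstacle, although routine, is upgrading the pointwise differentiability of $g$ at $\mu$ to a uniform-in-probability bound on the remainder, which is precisely what the combination of $X_n\overset{p}{\to}\mu$ and the continuity of $J_g$ at $\mu$ delivers.
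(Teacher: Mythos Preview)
Your argument is correct and is the standard proof of the delta method. Note, however, that the paper does not actually prove this lemma: it is stated as a known technical tool with a reference to \cite{O}, so there is no ``paper's own proof'' to compare against. Your Taylor-expansion-plus-Slutsky approach is exactly the classical one found in that reference and in most textbooks.
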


\subsection{Large deviations}
\label{Subsection:LargeDeviations}
Let $X := (X_n)_{n \in \N}$ be a sequence of random elements taking values in some Hausdorff topological space $\mathbb{X}$. Then, we say that $X$ satisfies a large deviation principle (LDP) with speed $s(n)$ and rate function $I_{X}$ if $s: \N \to (0, \infty)$, $I_{X}: \mathbb{X} \to [0, \infty]$ is lower semi-continuous, and if
\begin{align*}
- \inf _{x \in A^\circ } I_X (x) & \leq \liminf _{n \to \infty} \frac{1}{s(n)} \log \mathbb{P} (X_n \in A ) \leq \limsup _{n \to \infty} \frac{1}{s(n)} \log \mathbb{P} (X_n \in A ) \leq - \inf _{x \in \overline{ A} } I_X (x)
\end{align*}
for all Borel sets $A \subset \mathbb{X}$. 

The rate function $I_X$ is said to be a good rate function if it has compact level sets $\lbrace x \in \mathbb{X} ; I_X (x) \leq \alpha \rbrace$, for every $\alpha \in [0, \infty)$.

Let $\mathbb{X} = \R^d$ for some $d \in \N$ and let $\Lambda : \R^d \to \R \cup \lbrace +\infty \rbrace$. Then, the Legendre-Fenchel transform of $\Lambda$ is denoted as $\Lambda ^* $ and defined as
\begin{align*}
\Lambda^* (x) := \sup _{u \in \R^d} \left[ \langle x , u \rangle - \Lambda (u) \right], \; \; x \in \R^d,
\end{align*}
where $\langle \cdot , \cdot \rangle$ denotes the standard scalar product on $\R^d$. Moreover, we define the (effective) domain of $\Lambda$ to be the set $D_\Lambda := \lbrace u \in \R^d ; \Lambda (u) < \infty \rbrace$.

We now present some fundamental results from large deviations theory. First, we recall what is known as Cram\'er's theorem, which provides an LDP for sequences of independent and identically distributed random vectors (Theorem 2.2.3. in \cite{DZ}).
\begin{lemma}[Cramer's theorem]
Let $X_1,X_2,...$ be independent and identically distributed random vectors taking values in $\R^d$. Assume that the origin is an interior point of $D_\Lambda$, where $\Lambda (u) = \log \E e^{\langle u , X \rangle }$. Then, the sequence of partial sums $\left( \frac{1}{n} \sum _{i=1} ^n X_i \right)_{n \in \mathbb{N}}$, satisfies an LDP on $\R^n$ with speed $n$ and good rate function $\Lambda ^* $.
\end{lemma}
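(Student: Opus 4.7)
The plan is to prove the LDP for $S_n := \frac{1}{n}\sum_{i=1}^n X_i$ by establishing matching upper and lower bounds separately, following the classical Cram\'er--Chernoff strategy. The key properties are that $\Lambda$ is convex and lower semi-continuous and, because $0 \in D_\Lambda^\circ$, all exponential moments of $X_1$ exist in a neighbourhood of the origin; consequently $\Lambda^*$ is convex, non-negative, lower semi-continuous, and vanishes at $\E X_1$. Moreover $\Lambda^*$ is already a \emph{good} rate function: for any fixed $u\neq 0$ with $\pm u \in D_\Lambda^\circ$, the bound $\Lambda^*(x)\geq \langle u,x\rangle - \Lambda(u)$ forces $\Lambda^*(x)\to\infty$ as $|x|\to\infty$, so its sublevel sets are compact.

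For the upper bound, the starting point is the exponential Chebyshev inequality
\begin{align*}
\mathbb{P}(\langle u, S_n\rangle \geq c) \leq e^{-n[c - \Lambda(u)]}, \qquad u\in\R^d,\; c\in\R.
\end{align*}
Applying this with small $\pm u$ in each coordinate direction yields \emph{exponential tightness}: for every $\alpha>0$ there is a compact $K_\alpha\subset\R^d$ with $\limsup_n \frac{1}{n}\log\mathbb{P}(S_n\notin K_\alpha)\leq -\alpha$. It therefore suffices to establish the upper bound on compact sets. For a compact closed $F$, one covers $F$ by finitely many small balls and, on each ball centred at $x_i$, selects the tilt vector $u_i$ so that $\langle u_i,x_i\rangle - \Lambda(u_i)$ nearly attains $\Lambda^*(x_i)$; taking the worst ball and shrinking the radii yields $\limsup_n\frac{1}{n}\log\mathbb{P}(S_n\in F)\leq -\inf_{x\in F}\Lambda^*(x)$ for every closed $F$.

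For the lower bound, fix an open $G$ and a point $y\in G$ with $\Lambda^*(y)<\infty$. The idea is an exponential change of measure. When there exists $\eta\in\R^d$ with $\nabla\Lambda(\eta)=y$, define the tilted law of $X_1$ by $d\tilde{\mathbb{P}}/d\mathbb{P} = e^{\langle \eta,X_1\rangle - \Lambda(\eta)}$; under $\tilde{\mathbb{P}}$ the i.i.d.\ summands have mean $y$, so by the weak law $\tilde{\mathbb{P}}(S_n\in B(y,\varepsilon))\to 1$, while the Radon--Nikodym derivative produces
\begin{align*}
\mathbb{P}(S_n\in B(y,\varepsilon)) \geq e^{-n[\langle \eta, y\rangle - \Lambda(\eta) + \varepsilon|\eta|]}\,\tilde{\mathbb{P}}(S_n\in B(y,\varepsilon)).
\end{align*}
Taking $n\to\infty$ and then $\varepsilon\to 0$ gives $\liminf_n\frac{1}{n}\log\mathbb{P}(S_n\in G)\geq -\Lambda^*(y)$.

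The main obstacle is handling points $y\in G$ for which no such tilt exists, e.g.\ when $y$ lies on the boundary of the range of $\nabla\Lambda$ or outside the relative interior of $D_{\Lambda^*}$. This is resolved by convex-analytic approximation: using the Fenchel--Moreau identity $\Lambda^{**}=\Lambda^*$ (valid since $\Lambda^*$ is convex and l.s.c.), every such $y$ is approximated by exposed points $y_k\in G$ for which the tilt is available, and along which $\Lambda^*(y_k)\to \Lambda^*(y)$ by lower semi-continuity. Applying the tilting argument at each $y_k$ and passing to the limit using the openness of $G$ completes the proof.
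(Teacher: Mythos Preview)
The paper does not actually prove this lemma: it is stated as a background result and attributed to Dembo--Zeitouni (Theorem~2.2.3 in \cite{DZ}), with no proof given. Your sketch is the standard Cram\'er--Chernoff argument that appears in that reference, and the outline is correct: exponential tightness from $0\in D_\Lambda^\circ$ together with a finite covering gives the upper bound for closed sets, and the tilted change-of-measure gives the lower bound at exposed points of $\Lambda^*$, which one then extends by convex approximation. One small imprecision: the convergence $\Lambda^*(y_k)\to\Lambda^*(y)$ along the approximating exposed points is not a consequence of lower semi-continuity alone (that only gives $\liminf_k\Lambda^*(y_k)\ge\Lambda^*(y)$); rather, it comes from the structure theory of convex functions---one approximates $y$ through the relative interior of $\{\Lambda^*<\infty\}$ where $\Lambda^*$ is continuous along segments, and uses that exposed points are dense there. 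With that caveat, your proposal matches the classical proof the paper is citing.
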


Next result, which will be used in the following sections, states that if $(X_n)_{n \in \N}$ is a sequence of random variables that satisfies an LDP with speed $s(n)$ and rate function $I_X$ and $(Y_n)_{n \in \N}$ is a sequence of random variables that is "close" to $(X_n)_{n \in \N}$ in some sense, then the sequence $(Y_n)_{n \in \N}$ will also satisfy an LDP under some conditions (Theorem 4.2.13. in \cite{DZ}).
\begin{lemma}[Exponential equivalence]
\label{LemExponentialEquivalence}
Let $X = (X_n)_{n \in \N}$ and $Y = (Y_n)_{n \in \N}$ be two sequences of random variables and assume that $X$ satisfies an LDP with speed $s(n)$ and rate function $I_X$. Further, assume that $X$ and $Y$ are exponentially equivalent, i.e., 
\begin{align*}
\limsup _{n \to \infty} \frac{1}{s(n)} \log \mathbb{P} ( | X_n - Y_n | > \delta ) = - \infty
\end{align*}
for any $\delta >0$. Then, $Y$ satisfies an LDP with the same speed and the same rate function as $X$.
\end{lemma}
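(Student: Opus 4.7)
The plan is to establish the LDP upper and lower bounds for $(Y_n)_{n\in\N}$ separately, by comparing with $(X_n)_{n\in\N}$ using a standard ``fattening/shrinking'' decomposition, and killing the error term via the exponential equivalence hypothesis. Throughout, I will use the ``principle of the largest term'': $\frac{1}{s(n)}\log(a_n+b_n) \leq \frac{\log 2}{s(n)} + \max\bigl\{\frac{1}{s(n)}\log a_n,\frac{1}{s(n)}\log b_n\bigr\}$, so that in the limit $\frac{1}{s(n)}\log(a_n+b_n)$ is controlled by the worst of the two summands.

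For the upper bound, fix a closed set $F\subset \mathbb{X}$ and $\delta>0$, and let $F^\delta := \{y\in\mathbb{X}:\dist(y,F)\leq \delta\}$, which is again closed. Since $\{Y_n\in F\} \subseteq \{X_n\in F^\delta\}\cup \{|X_n-Y_n|>\delta\}$, taking probabilities and applying the principle of the largest term gives
\begin{align*}
\limsup_{n\to\infty}\frac{1}{s(n)}\log \mathbb{P}(Y_n\in F) \leq \max\Bigl\{-\inf_{x\in F^\delta} I_X(x),\ -\infty\Bigr\} = -\inf_{x\in F^\delta} I_X(x),
\end{align*}
where the first term uses the LDP upper bound for $(X_n)$ on the closed set $F^\delta$ and the second uses the exponential equivalence. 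Now I let $\delta\downarrow 0$ and argue that $\inf_{F^\delta}I_X \to \inf_F I_X$; this is where goodness of $I_X$ enters, via a diagonal/compactness argument: pick nearly-minimizing $x_\delta\in F^\delta$, extract a convergent subsequence inside a compact level set of $I_X$, observe the limit lies in $\overline F=F$, and invoke lower semicontinuity.

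For the lower bound, fix an open set $G$ and a point $x\in G$ with $I_X(x)<\infty$ (otherwise the bound is trivial). Choose $\delta>0$ with $B(x,2\delta)\subset G$. The inclusion $\{X_n\in B(x,\delta)\}\cap\{|X_n-Y_n|\leq\delta\}\subseteq \{Y_n\in G\}$ yields $\mathbb{P}(Y_n\in G)\geq \mathbb{P}(X_n\in B(x,\delta)) - \mathbb{P}(|X_n-Y_n|>\delta)$. Again by the principle of the largest term combined with the exponential equivalence, the subtracted term is negligible on the scale $s(n)$, so
\begin{align*}
\liminf_{n\to\infty}\frac{1}{s(n)}\log \mathbb{P}(Y_n\in G) \geq \liminf_{n\to\infty}\frac{1}{s(n)}\log \mathbb{P}(X_n\in B(x,\delta)) \geq -\inf_{y\in B(x,\delta)}I_X(y) \geq -I_X(x),
\end{align*}
using the LDP lower bound for $(X_n)$ on the open ball $B(x,\delta)$. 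Taking the supremum over $x\in G$ completes the lower bound.

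The main technical obstacle I expect is step two: justifying $\inf_{F^\delta}I_X \to \inf_F I_X$ as $\delta\downarrow 0$. Mere lower semicontinuity of $I_X$ only gives $\inf_{F^\delta}I_X \leq \inf_F I_X$; the reverse inequality in the limit genuinely requires the compactness of sublevel sets (goodness), which the hypothesis provides implicitly through the LDP assumption and which is the standard way this step closes in Dembo--Zeitouni's exposition. Once that passage to the limit is in hand, the rest of the argument is a routine bookkeeping of the two inequalities above.
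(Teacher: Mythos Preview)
The paper does not prove this lemma; it is stated as a preliminary result and attributed to Dembo--Zeitouni (Theorem 4.2.13 in \cite{DZ}). Your argument is exactly the standard proof found there, so there is nothing to compare.

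One genuine issue: you write that goodness of $I_X$ is ``provided implicitly through the LDP assumption.'' This is not true---an LDP with a merely lower semicontinuous rate function does not automatically have compact sublevel sets. In Dembo--Zeitouni's Theorem 4.2.13 goodness is an explicit hypothesis, and the paper's statement here is slightly imprecise in omitting it. Your upper-bound argument, specifically the passage $\inf_{F^\delta}I_X \to \inf_F I_X$ as $\delta\downarrow 0$, does require goodness (as you correctly identify), so you should add it as a hypothesis rather than claim it comes for free. With that amendment the proof is complete and standard.
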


Lastly, the contraction principle allows to transform a given LDP to another by a continuous function (Theorem 4.2.1. in \cite{DZ}). 
\begin{lemma}[Contraction principle]
\label{LemContractionPrinciple}
Let $\mathbb{X}$ and $\mathbb{Y}$ be two Hausdorff topological spaces and $F: \mathbb{X} \to \mathbb{Y}$ be a continuous function. Further, let $X = (X_n)_{n \in \N}$ be a sequence of $\mathbb{X}$-valued random elements that satisfies an LDP with speed $s(n)$ and good rate function $I_X$. Then, the sequence $Y:=  ( F(X_n) )_{n \in \N}$ satisfies an LDP on $\mathbb{Y}$ with the same speed and with good rate function $I_Y = I_X \circ F^{-1}$, i.e., $I_Y(y) := \inf \lbrace I_X(x) ; F(x) = y \rbrace$, $y \in \mathbb{Y}$ with the convention that $I_Y(y) = +\infty$ if $F^{-1} ( \lbrace y \rbrace ) = \emptyset$. 
\end{lemma}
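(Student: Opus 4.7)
The plan is to verify the two ingredients of an LDP for $Y_n = F(X_n)$ separately: goodness of the contracted rate function $I_Y$, and the matching upper and lower bounds on exponential probabilities. Both are obtained by transporting the corresponding properties of $X$ back through the continuous map $F$.

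The first step is to establish the identity
$$\{y \in \mathbb{Y} : I_Y(y) \le \alpha\} = F\bigl(\{x \in \mathbb{X} : I_X(x) \le \alpha\}\bigr)$$
for every $\alpha \ge 0$. Inclusion $\supset$ is immediate from the defining relation $I_Y(F(x)) \le I_X(x)$. For $\subset$, if $I_Y(y) \le \alpha$ then by definition there is a minimizing sequence $x_k \in F^{-1}(\{y\})$ with $I_X(x_k) \to I_Y(y)$; since $I_X$ is good, the $x_k$ eventually lie in the compact set $\{x : I_X(x) \le \alpha + 1\}$, so a subsequence converges to some $x^*$, continuity of $F$ gives $F(x^*) = y$, and lower semi-continuity of $I_X$ gives $I_X(x^*) \le \alpha$. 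The right-hand side of the displayed identity is then the continuous image of a compact set, hence compact, and closed because $\mathbb{Y}$ is Hausdorff. This yields simultaneously the lower semi-continuity of $I_Y$ and the compactness of its level sets, so $I_Y$ is a good rate function.

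Second, I would transport the two LDP inequalities. Continuity of $F$ ensures that $F^{-1}(C)$ is closed whenever $C \subset \mathbb{Y}$ is closed and $F^{-1}(O)$ is open whenever $O \subset \mathbb{Y}$ is open, while the trivial identity $\mathbb{P}(Y_n \in A) = \mathbb{P}(X_n \in F^{-1}(A))$ holds for every Borel $A \subset \mathbb{Y}$. Applying the upper LDP bound for $X$ to $F^{-1}(C)$ and the lower LDP bound to $F^{-1}(O)$, together with the purely definitional identity
$$\inf_{x \in F^{-1}(A)} I_X(x) \;=\; \inf_{y \in A} \; \inf_{x \in F^{-1}(\{y\})} I_X(x) \;=\; \inf_{y \in A} I_Y(y),$$
delivers the LDP upper and lower bounds for $Y$ at speed $s(n)$ with rate function $I_Y$.

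The delicate point is entirely in the first step: one must show that the infimum defining $I_Y(y)$ is attained whenever it is finite, so that the level sets of $I_Y$ are exactly the $F$-images of the level sets of $I_X$. This is where the hypothesis that $I_X$ is \emph{good}, rather than merely a rate function, is genuinely used; without compactness of the level sets of $I_X$, the level sets of $I_Y$ need not be closed and $I_Y$ need not be lower semi-continuous. Everything else is a formal consequence of continuity of $F$ and the identity $\mathbb{P}(Y_n \in A) = \mathbb{P}(X_n \in F^{-1}(A))$.
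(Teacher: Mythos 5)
The paper itself gives no proof of this lemma; it simply cites Theorem 4.2.1 of Dembo--Zeitouni \cite{DZ}. Your argument is precisely the standard textbook proof from that reference: contract the level sets first, then transport the two inequalities through $F^{-1}$, and everything checks out. The one place where you have to be a bit careful is the statement that a minimizing sequence $x_k$ in the compact level set has a \emph{convergent subsequence}: for arbitrary Hausdorff spaces $\mathbb{X}$ (which is the stated level of generality) compact does not imply sequentially compact, so one should extract a convergent \emph{subnet} rather than a subsequence. Continuity of $F$ along nets and lower semi-continuity of $I_X$ along nets then yield $F(x^*)=y$ and $I_X(x^*)\le\alpha$ exactly as you argue, so the conclusion stands; in the applications in this paper $\mathbb{X}=\R^d$ and sequences suffice anyway. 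Your closing remark correctly identifies where goodness of $I_X$ is genuinely used (attainment of the infimum at the level $\alpha$ boundary, which is what makes the level sets of $I_Y$ closed); that is exactly the pivotal step, and the rest is a formal consequence of $\mathbb{P}(Y_n\in A)=\mathbb{P}(X_n\in F^{-1}(A))$ together with $F^{-1}$ preserving openness and closedness.
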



\section{Previous results and comparison}
\label{section3}

In this section, we will provide a short and compact proof of Hammersley's CLT in the case of the $n$-dimensional Euclidean sphere (see \cite{H}).

Let $X^{(n)} , Y^{(n)}$ be independent random vectors uniformly distributed on the Euclidean $n$-dimensional sphere, $S^{n-1}$. In \cite{H}, the author approached the problem by studying the distribution function of the distance between $X^{(n)}$ and $Y^{(n)}$, which was shown to be close to an incomplete beta-function ratio, and therefore, tends asymptotically to a Gaussian, as the dimension grows.

Following \cite{H}, in \cite{L}, the author proved the result by extending it to spherical distributions, whose density function depends only on the modulus of the random vector.

The underlying idea of the proofs relies on the rotational invariance of the sphere, which implies that $\dist(X^{(n)},Y^{(n)})$ depends only on a single random variable. To see this, fix a point $x_0 \in S^{n-1}$ on the sphere. For any $x\in S^{n-1}$,
$$ \int _{S^{n-1}} \mathbf{1}_{ \lbrace \dist(x,y) \rbrace }  d \sigma(y) =  \int _{S^{n-1}} \mathbf{1}_{ \lbrace \dist(x_0,y) \rbrace } d\sigma(y), $$
where $\sigma$ is the normalized cone measure on $S^{n-1}$, i.e., $\sigma (S^{n-1}) = 1$. 
Then, 
\begin{align*}
\mathbb{P} (\dist (X^{(n)}, Y^{(n)}) \leq t) & = \int_{S^{n-1}} \mathbb{P} (\dist(x, Y^{(n)}) \leq t \mid X^{(n)} = x)  \, d\sigma(x)
 \\
 & =  \int_{S^{n-1}} \int_{S^{n-1}} \mathbf{1}_{\{\dist(x, y) \leq t\}} \, d \sigma(y) \, d\sigma(x)
 \\
 & =  \int_{S^{n-1}}  \mathbf{1}_{\{\dist(x_0, y) \leq t\}} \, d\sigma(y) \int_{S^{n-1}}  d\sigma(x)
 \\
 & =  \int_{S^{n-1}} \mathbf{1}_{\{\dist(x_0, y) \leq t\}} \, d\sigma(y)
 \\
 & = \mathbb{P} (\dist(x_0, Y^{(n)}) \leq t).
\end{align*}

Therefore, the problem is reduced to studying the asymptotics of the random variable $\dist(x_0, Y^{(n)})$. For the general case, i.e., when $X^{(n)} , Y^{(n)}$ are uniformly distributed on $B_p^ n$ or $\partial B_p^n$, for $p\in [1, \infty]$ and $n \in \N$, we develop suitable techniques in sections \ref{section4} and \ref{section5}.

Let us prove the following theorem.
\begin{theorem}
\label{Thm:CLThypersphere}
Let $X^{(n)},Y^{(n)} \sim \text{Unif} (S^{n-1})$ be independent random vectors. Then,
\begin{align*}
  \frac{ \dist(X^{(n)},Y^{(n)}) - \mathbb{E} [\dist(X^{(n)},Y^{(n)})]}{ \sqrt{ Var[\dist(X^{(n)},Y^{(n)})]} } \overset{d}{\to} N(0,1) .
\end{align*}
In particular, 
\begin{align*}
\sqrt{2n} \left( \dist(X^{(n)},Y^{(n)}) - \sqrt{2} \right) \overset{d}{\to}  N(0,1) .
\end{align*}
\end{theorem}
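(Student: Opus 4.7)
\emph{Proof plan.} The plan is to build on the rotational-invariance reduction already carried out in the excerpt: fixing $x_0 = e_1 = (1,0,\dots,0)$, we have
\[
\dist(X^{(n)},Y^{(n)}) \overset{d}{=} \|e_1-Y^{(n)}\|_2 = \sqrt{2-2Y_1^{(n)}},
\]
where $Y_1^{(n)}$ denotes the first coordinate of $Y^{(n)}$. So the whole problem collapses to understanding the one-dimensional asymptotics of $Y_1^{(n)}$.

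By the Schechtman--Zinn representation (Lemma~\ref{Lem:SchAndZinn} with $p=2$), $Y^{(n)} \overset{d}{=} G/\|G\|_2$ with $G = (G_1,\dots,G_n)$ a standard Gaussian vector in $\R^n$, so
\[
\sqrt{n}\,Y_1^{(n)} \overset{d}{=} \frac{G_1}{\|G\|_2/\sqrt{n}}.
\]
Since $\|G\|_2^2/n\to 1$ almost surely by the strong law of large numbers and $G_1\sim N(0,1)$, Slutsky's theorem yields $\sqrt{n}\,Y_1^{(n)} \overset{d}{\to} N(0,1)$.

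Now apply the delta method (Lemma~\ref{DeltaMethod}) to $g(t) = \sqrt{2-2t}$, which satisfies $g(0) = \sqrt{2}$ and $g'(0) = -1/\sqrt{2}$. This gives $\sqrt{n}(\dist(X^{(n)},Y^{(n)}) - \sqrt{2}) \overset{d}{\to} N(0,1/2)$, equivalently
\[
\sqrt{2n}\,(\dist(X^{(n)},Y^{(n)}) - \sqrt{2}) \overset{d}{\to} N(0,1),
\]
which is the second displayed identity. The first identity then follows from this one by Slutsky, provided we check the moment asymptotics $\sqrt{n}(\E\dist - \sqrt{2}) \to 0$ and $2n\Var\dist \to 1$; these can be extracted from the explicit beta-integral formula for the moments of $|Y_1^{(n)}|^2 = G_1^2/\|G\|_2^2$ (which is $\mathrm{Beta}(1/2,(n-1)/2)$-distributed), and the uniform integrability needed to upgrade distributional convergence to convergence of the first two moments is automatic from the trivial bound $0\le\dist\le 2$.

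There is no real conceptual obstacle: rotational symmetry collapses the problem to a single coordinate, Schechtman--Zinn identifies its Gaussian limit, and the delta method handles the square root. The only mildly technical step is the moment matching required for the standardized form in the first display, but this is a routine computation using the beta-integral.
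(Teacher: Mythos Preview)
Your argument is correct and takes a genuinely different route from the paper's. The paper works directly with the explicit distribution function of $\dist$ (Lemma~\ref{lem:DistribDistSphere}): after the change of variables $y=\sqrt{n}\,x$, the density of $\sqrt{2n}(\dist-\sqrt{2})$ is shown to converge pointwise to the standard Gaussian density, and Scheff\'e's theorem (Theorem~\ref{UniformIntegrabTheorem}) upgrades this to convergence of the distribution functions. The mean and variance asymptotics are computed separately (Lemmas~\ref{Lem:ExpectedValueHypersphere} and~\ref{Lem:VarianceHypersphere}) via beta-function identities and Stirling's formula, and Slutsky assembles the standardized statement. Your approach bypasses the explicit density entirely: it imports the Schechtman--Zinn representation, Slutsky, and the delta method, so that the sphere case becomes essentially a specialization of the machinery of Section~\ref{section4}. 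This is more conceptual and uniform with the rest of the paper; the paper's Section~\ref{section3} argument, by contrast, is self-contained (no Gaussian representation needed) and yields exact closed-form expressions for $\E\dist$ and $\Var\dist$ as a by-product, which is its stated purpose.

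One small caveat: the uniform integrability of $Z_n^2:=2n(\dist-\sqrt{2})^2$ is \emph{not} automatic from the bound $0\le\dist\le 2$ alone, since $Z_n$ itself is not uniformly bounded. What does work is to write $Z_n=-2\sqrt{n}\,Y_1^{(n)}\big/\big(1+\sqrt{1-Y_1^{(n)}}\big)$, so that $|Z_n|\le 2\sqrt{n}\,|Y_1^{(n)}|$, and then $\sup_n\E Z_n^4\le 16\,\sup_n n^2\,\E|Y_1^{(n)}|^4<\infty$ follows from the very beta moments you invoke (since $\E|Y_1^{(n)}|^4=3/(n(n+2))$). So your ingredients are right; only the attribution of the UI to boundedness is off.
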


To prove this result, let us first compute the distribution function of the distance.

\begin{lemma}
\label{lem:DistribDistSphere}
Let $X^{(n)},Y^{(n)}$ be independent random vectors uniformly distributed on $S^{n-1}$, for each $n \in \N$. Then, for every $t \in [0,2]$
\begin{align*}
\mathbb{P} (\dist(X^{(n)},Y^{(n)}) \leq t) & = \frac{\Gamma ( \frac{n}{2} ) }{ \sqrt{\pi} \Gamma ( \frac{n-1}{2} )} \int_{1-\frac{t^2}{2}}^{1} (1- x^2)^{(n-3)/2} dx 
 \\
  & = \frac{\Gamma ( \frac{n}{2} ) }{ \sqrt{\pi} \Gamma ( \frac{n-1}{2} )} \int_0^{s}  (\sin (\theta))^{n-2} d\theta ,
\end{align*}
where $s = arc cos (1-t^2/2)$.
\end{lemma}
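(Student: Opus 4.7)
The plan is to combine the rotational invariance argument already carried out in the paragraph preceding the lemma with the explicit formula for the marginal distribution of a single coordinate of a uniform point on $S^{n-1}$, and then to convert the resulting integral by the substitution $x=\cos\theta$.

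First, by the rotational invariance computation recalled just before the lemma, one has
\begin{align*}
\mathbb{P}\bigl(\mathrm{dist}(X^{(n)},Y^{(n)})\le t\bigr)
= \mathbb{P}\bigl(\mathrm{dist}(x_0,Y^{(n)})\le t\bigr)
\end{align*}
for any fixed $x_0\in S^{n-1}$. I would take $x_0=e_n=(0,\dots,0,1)$. For $y\in S^{n-1}$ we have $\Vert e_n-y\Vert_2^2 = 2-2\langle e_n,y\rangle = 2-2y_n$, so the event $\{\mathrm{dist}(e_n,Y^{(n)})\le t\}$ coincides with $\{Y_n^{(n)}\ge 1-t^2/2\}$, where $Y_n^{(n)}$ denotes the last coordinate of $Y^{(n)}$. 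The problem thus reduces to identifying the density of $Y_n^{(n)}$.

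Next I would establish that the marginal density of $Y_n^{(n)}$ on $[-1,1]$ equals
\begin{align*}
f_n(x)=\frac{\Gamma(n/2)}{\sqrt{\pi}\,\Gamma((n-1)/2)}(1-x^2)^{(n-3)/2}.
\end{align*}
The cleanest route is the Gaussian representation: if $Z=(Z_1,\dots,Z_n)$ has i.i.d.\ standard Gaussian coordinates, then $Z/\Vert Z\Vert_2$ is uniform on $S^{n-1}$; writing $Y_n^{(n)}=Z_n/\sqrt{Z_n^2+\sum_{i<n}Z_i^2}$, conditioning on $Z_n=x\sqrt{S}$ with $S=\sum_{i=1}^nZ_i^2$ and using independence of direction and magnitude (or, equivalently, the Beta$\bigl(\tfrac12,\tfrac{n-1}{2}\bigr)$ distribution of $(Y_n^{(n)})^2$) yields this density. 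An alternative derivation uses the surface-area formula: parametrising $y=(\sqrt{1-x^2}\,\omega,x)$ with $\omega\in S^{n-2}$ gives the Jacobian factor $(1-x^2)^{(n-2)/2}\cdot(1-x^2)^{-1/2}=(1-x^2)^{(n-3)/2}$, and the normalising constant follows from the beta integral $\int_{-1}^{1}(1-x^2)^{(n-3)/2}dx=B(\tfrac12,\tfrac{n-1}{2})=\sqrt{\pi}\,\Gamma((n-1)/2)/\Gamma(n/2)$.

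Combining these two ingredients gives immediately
\begin{align*}
\mathbb{P}\bigl(\mathrm{dist}(X^{(n)},Y^{(n)})\le t\bigr)
=\frac{\Gamma(n/2)}{\sqrt{\pi}\,\Gamma((n-1)/2)}\int_{1-t^2/2}^{1}(1-x^2)^{(n-3)/2}\,dx,
\end{align*}
which is the first equality. For the second, I would apply the change of variables $x=\cos\theta$, $dx=-\sin\theta\,d\theta$, so that $(1-x^2)^{(n-3)/2}\,dx=-(\sin\theta)^{n-2}\,d\theta$; the bound $x=1$ corresponds to $\theta=0$ and $x=1-t^2/2$ to $\theta=s=\arccos(1-t^2/2)$, producing $\int_0^s(\sin\theta)^{n-2}d\theta$. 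The only nontrivial step is the density computation in the second paragraph; the rest is bookkeeping and a substitution.
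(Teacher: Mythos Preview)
Your proof is correct and close in spirit to the paper's, but the route differs in a small, worthwhile way. The paper fixes $x_0=e_1$, parametrises points by the angle $\theta$ between $x_0$ and $y$, computes the $(n-2)$-dimensional Hausdorff measure of each level set $\{\theta=\text{const}\}$ (an $(n-2)$-sphere of radius $\sin\theta$), and integrates over $\theta\in[0,s]$ to obtain the trigonometric form first, converting to the $x$-integral afterwards. You instead observe that $\mathrm{dist}(e_n,Y)^2=2-2Y_n$ and go straight to the marginal density of a single coordinate of a uniform point on $S^{n-1}$, obtaining the $x$-integral directly and then substituting $x=\cos\theta$ for the second expression. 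Your approach is slightly more economical, since the marginal density of a sphere coordinate is a standard fact with several short proofs (as you indicate), whereas the paper's level-set computation is more hands-on geometric; conversely, the paper's version makes the geometric picture (slicing the sphere by parallel hyperplanes) more explicit. Either way the content is the same and both arguments are complete.
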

\begin{proof}

Without loss of generality, we fix a point on the sphere $x_0 = e_1$. Let $\theta \in [0,2\pi ]$ be the angle between the line joining $x_0$ and the origin, with the line joining the origin and a point $y_\theta$ on the sphere.

First, note that the lenght of the chord joining $x_0$ and $y_\theta$ is $2 \sin (\theta / 2)$, and so it follows that $\cos( \theta) = 1- \dist(x_0 , y _\theta)^2 / 2 $.

Given an angle $\theta \in [0, 2\pi ]$, the $(n-2)$-dimensional Hausdorff measure of the points at distance $2 \sin (\theta / 2)$ of $x_0$ is
$$
\frac{2 \pi^{\frac{(n-1)}{2}}}{\Gamma ( \frac{n-1}{2} )} (\sin (\theta))^{n-2} .
$$
Notice that, given $t \in [0,2]$, the event $\lbrace \dist (x_0,Y^{(n)}) \leq t \rbrace$ has the same pobability as the event $\lbrace 2\sin ( \theta / 2 ) \leq t \rbrace$, where $\theta$ is again the angle between the line joining $x_0$ and the origin, with the line joining the origin and $Y^{(n)}$.

Therefore,
\begin{align*}
\mathbb{P} ([\dist(X^{(n)},Y^{(n)})] \leq t) & = \int_0^{s} \frac{\frac{2 \pi^{\frac{n-1}{2}}}{\Gamma ( \frac{n-1}{2} )} (\sin (\theta))^{n-2}}{\frac{2 \pi^{\frac{n}{2}}}{\Gamma ( \frac{n}{2} )}} d\theta 
 \\
 & = \frac{\Gamma ( \frac{n}{2} ) }{ \sqrt{\pi} \Gamma ( \frac{n-1}{2} )} \int_0^{s}  (\sin (\theta))^{n-2} d\theta ,
\end{align*}
where $t = 2 \sin (s / 2)$ or $t = \sqrt{2} \sqrt{1- \cos (s)}$, i.e., $s = 2 \arcsin (t / 2)$ or $s = \arccos (1-t^2/2)$. Thus, we can also write is as
\begin{align*}
\mathbb{P} ([\dist(X^{(n)},Y^{(n)})] \leq t) & = \frac{\Gamma ( \frac{n}{2} ) }{ \sqrt{\pi} \Gamma ( \frac{n-1}{2} )} \int_{1-\frac{t^2}{2}}^{1} (1- x^2)^{(n-3)/2} dx.
\end{align*}
\end{proof}

Now, we are ready to compute the expected value of the distance.

\begin{lemma}
\label{Lem:ExpectedValueHypersphere}
Let $X^{(n)},Y^{(n)} \sim \text{Unif}( S^{n-1})$ independent random vectors. Then, 
$$ \mathbb{E} [ \dist(X^{(n)},Y^{(n)})] = \frac{2^{n-1} \left( \Gamma(n/2) \right)^2}{\sqrt{\pi}\Gamma(n-1/2)}  \underset{n \to \infty}{ \longrightarrow} \sqrt{2}. $$
\end{lemma}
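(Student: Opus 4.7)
The plan is to differentiate the CDF from Lemma \ref{lem:DistribDistSphere} to get an explicit density, reduce the expectation to a Beta integral, and then extract the limit via Legendre's duplication formula.

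First, I would differentiate the identity
\begin{align*}
\mathbb{P}(\dist(X^{(n)},Y^{(n)}) \leq t) = \frac{\Gamma(n/2)}{\sqrt{\pi}\,\Gamma((n-1)/2)} \int_{1 - t^2/2}^1 (1-x^2)^{(n-3)/2}\, dx
\end{align*}
with respect to $t \in [0,2]$. The chain rule produces a factor of $t$, and the simplification $1-(1-t^2/2)^2 = t^2(1 - t^2/4)$ yields a density of the form
\begin{align*}
f(t) = \frac{\Gamma(n/2)}{\sqrt{\pi}\,\Gamma((n-1)/2)}\, t^{n-2}\,\bigl(1 - t^2/4\bigr)^{(n-3)/2}, \qquad t\in[0,2].
\end{align*}

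Next, the substitution $u = t^2/4$ transforms $\mathbb{E}[\dist(X^{(n)},Y^{(n)})] = \int_0^2 t f(t)\, dt$ into $2^{n-1}\,B(n/2,(n-1)/2)$ times the normalising constant of $f$. Writing $B(n/2,(n-1)/2) = \Gamma(n/2)\Gamma((n-1)/2)/\Gamma(n-1/2)$, the factor $\Gamma((n-1)/2)$ cancels and the closed form $2^{n-1}(\Gamma(n/2))^2/(\sqrt{\pi}\,\Gamma(n-1/2))$ claimed in the statement falls out immediately.

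For the asymptotics, I would apply Legendre's duplication formula with $z = n/2 - 1/4$ to rewrite $\Gamma(n-1/2) = 2^{n-3/2}\pi^{-1/2}\,\Gamma(n/2 - 1/4)\,\Gamma(n/2 + 1/4)$. Substituting back leads to
\begin{align*}
\mathbb{E}[\dist(X^{(n)},Y^{(n)})] = \sqrt{2}\, \cdot \frac{(\Gamma(n/2))^2}{\Gamma(n/2-1/4)\,\Gamma(n/2+1/4)},
\end{align*}
and the remaining ratio tends to $1$ as $n \to \infty$ by the classical Stirling consequence $\Gamma(x+a)/\Gamma(x) \sim x^{a}$, applied with $a = \pm 1/4$ around $x = n/2$. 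This gives the claimed limit $\sqrt{2}$.

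I do not anticipate any substantive obstacle: the density step is a direct differentiation, the expectation is a one-substitution Beta reduction, and the limit follows from an elementary duplication-formula identity combined with Stirling. The main care required is simply bookkeeping the factor of $t$ coming from differentiating the lower limit $1-t^2/2$, and tracking the exponents through the $u = t^2/4$ substitution so that the resulting integrand matches $u^{n/2-1}(1-u)^{(n-3)/2}$.
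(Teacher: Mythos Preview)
Your proposal is correct and follows essentially the same route as the paper: differentiate the distribution function from Lemma~\ref{lem:DistribDistSphere} to obtain the density, reduce $\int_0^2 t f(t)\,dt$ to $2^{n-1}B(n/2,(n-1)/2)$, and then extract the asymptotics from Gamma-function identities. The only difference is in the final step: the paper applies second-order Stirling directly (which in passing yields the refinement $\mathbb{E}[\dist]=\sqrt{2}-\tfrac{1}{4\sqrt{2}n}+o(1/n)$ used later in the proof of Theorem~\ref{Thm:CLThypersphere}), whereas your duplication-formula rewriting gives the limit $\sqrt{2}$ more cleanly but does not immediately produce that $1/n$ correction.
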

\begin{proof}
By differentiating the distribution function of $\dist ( X^{(n)} , Y^{(n)})$, we obtain its density function given by
\begin{align*}
f(t) = \frac{\Gamma ( \frac{n}{2} ) }{ \sqrt{\pi} \Gamma ( \frac{n-1}{2} )} \left( \sin \left( 2 \arcsin \left( \frac{t}{2} \right) \right) \right)^{n-2} \frac{1}{\sqrt{1-\frac{t^2}{4}}} .
\end{align*}
Applying the change of variables $\theta = 2 \arcsin ( t/2 )$, we obtain that
\begin{align*}
\mathbb{E}[\dist(x_0 , Y^{(n)})] & = \frac{\Gamma ( \frac{n}{2} ) }{ \sqrt{\pi} \Gamma ( \frac{n-1}{2} )} \int_0^{\pi} 2\sin{(\theta/2)} (\sin (\theta))^{n-2} d\theta  .
\end{align*}
Using the identity $\sin (x) = 2 \sin (x/2) \cos (x/2)$ and the change of variables $x=\theta/2$,
\begin{align*}
\int_0^{\pi} 2\sin{(\theta/2)} (\sin (\theta))^{n-2} d\theta & = 
\int_0^{\pi} 2\sin{(\theta/2)} (2 \sin (\theta/2) \cos (\theta/2))^{n-2} d\theta
 \\
 & = \int_0^{\pi} ( 2\sin{(\theta/2)} ) ^{n-1} ( \cos ( \theta /2))^{n-2} d\theta
 \\
 & = 2^{n} \int_0^{\frac{\pi}{2}} ( \sin ( x ) ) ^{n-1} ( \cos (x))^{n-2} dx
  \\
  & = 2^{n-1} B \left( \frac{n}{2} , \frac{n-1}{2}  \right),
\end{align*}
where $B$ is the Beta function. Thus, we conclude that 
\begin{align*}
\mathbb{E}[\dist(x_0 , Y^{(n)})] & = \frac{\Gamma ( \frac{n}{2} ) }{ \sqrt{\pi} \Gamma ( \frac{n-1}{2} )} 2^{n-1} B \left( \frac{n}{2} , \frac{n-1}{2}  \right) 
 \\
 & = \frac{\Gamma ( \frac{n}{2} ) }{ \sqrt{\pi} \Gamma ( \frac{n-1}{2} )} 2^{n-1} 
 \frac{\Gamma ( \frac{n}{2} ) \Gamma ( \frac{n-1}{2} ) }{ \Gamma ( \frac{2n - 1}{2} )}
 \\
 & = \frac{2^{n-1} \left( \Gamma(n/2) \right)^2}{\sqrt{\pi}\Gamma(n-1/2)}.
\end{align*}
Furthermore, using the second order Stirling's formula,
\begin{align*}
\Gamma (1+n) = \sqrt{2 \pi n } \left( \frac{n}{e} \right)^n \left( 1 + \frac{1}{12n} + O\left( \frac{1}{n^2} \right) \right),
\end{align*}
and applying it to $\Gamma(n/2)$ and $\Gamma ( n- 1/2 )$, a straightforward computation yields  $$\mathbb{E}[\dist(x_0 , Y^{(n)})] = \sqrt{2} - \frac{1}{4 \sqrt{2}n} + o\left( 1/n \right) .$$
\end{proof}

In a similar way, we can compute the variance of the distance and its asymptotics.
\begin{lemma}
\label{Lem:VarianceHypersphere}
Let $X^{(n)},Y^{(n)} \sim \text{Unif}( S^{n-1})$ independent random vectors. Then, 
$$ Var[\dist(X^{(n)},Y^{(n)})] = 2 - \left( \frac{2^{n-1} \left( \Gamma(n/2) \right)^2}{\sqrt{\pi}\Gamma(n-1/2)} \right)^{2} , $$
and 
$$  \lim _{n \to \infty} 2 n  Var [\dist(X^{(n)} , Y^{(n)})] = 1. $$
\end{lemma}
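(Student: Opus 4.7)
The plan is to compute $\E[\dist^2(X^{(n)},Y^{(n)})]$ directly using the geometric structure of the sphere, rather than by integrating against the density from Lemma \ref{lem:DistribDistSphere}. Since both $X^{(n)}$ and $Y^{(n)}$ lie on $S^{n-1}$, we have the identity
\begin{align*}
\dist(X^{(n)},Y^{(n)})^2 = \|X^{(n)}-Y^{(n)}\|_2^2 = \|X^{(n)}\|_2^2 + \|Y^{(n)}\|_2^2 - 2\langle X^{(n)},Y^{(n)}\rangle = 2 - 2\langle X^{(n)}, Y^{(n)}\rangle.
\end{align*}
Taking expectations and using independence together with the rotational symmetry of the uniform distribution on $S^{n-1}$ (which forces $\E X^{(n)} = 0$, hence $\E \langle X^{(n)}, Y^{(n)}\rangle = \langle \E X^{(n)}, \E Y^{(n)}\rangle = 0$), we obtain $\E[\dist^2(X^{(n)},Y^{(n)})] = 2$. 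The first formula then follows immediately from $\Var = \E[\dist^2] - (\E[\dist])^2$ combined with Lemma \ref{Lem:ExpectedValueHypersphere}.

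For the asymptotic statement $2n\,\Var[\dist(X^{(n)},Y^{(n)})] \to 1$, I would reuse the Stirling expansion already carried out in the proof of Lemma \ref{Lem:ExpectedValueHypersphere}, which gave
\begin{align*}
\E[\dist(X^{(n)},Y^{(n)})] = \sqrt{2} - \frac{1}{4\sqrt{2}\,n} + o(1/n).
\end{align*}
Squaring this expansion yields $(\E[\dist])^2 = 2 - \frac{1}{2n} + o(1/n)$, and therefore
\begin{align*}
\Var[\dist(X^{(n)},Y^{(n)})] = 2 - (\E[\dist])^2 = \frac{1}{2n} + o(1/n),
\end{align*}
from which $2n\,\Var[\dist(X^{(n)},Y^{(n)})] \to 1$ is immediate.

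The only substantive obstacle is bookkeeping the Stirling expansion to the correct order: one needs the $1/n$ correction in $\E[\dist]$, not just the leading term $\sqrt{2}$, because the leading terms cancel in $2 - (\E[\dist])^2$. Since this computation is already implicit in the previous lemma, no new analytic input is required; the argument is essentially two lines once the identity $\|X-Y\|_2^2 = 2 - 2\langle X,Y\rangle$ is invoked.
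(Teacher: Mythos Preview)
Your proof is correct and reaches the same conclusions as the paper, but the computation of the second moment is handled differently. The paper evaluates $\E[\dist(x_0,Y^{(n)})^2]$ by integrating $(2\sin(\theta/2))^2$ against the angular density $(\sin\theta)^{n-2}$ from Lemma~\ref{lem:DistribDistSphere}, splitting via $4\sin^2(\theta/2)=2(1-\cos\theta)$ into a Beta integral and an odd integral that vanishes, to arrive at $\E[\dist^2]=2$. You bypass this entirely with the one-line identity $\|X-Y\|_2^2 = 2 - 2\langle X,Y\rangle$ on $S^{n-1}$ together with $\E\langle X,Y\rangle=0$ by independence and symmetry. Your route is shorter and avoids any integration; the paper's route has the minor advantage of staying within the density framework already set up, but yields nothing extra here. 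For the asymptotic $2n\,\Var\to 1$, both arguments are identical in substance: one squares the Stirling expansion $\E[\dist]=\sqrt{2}-\tfrac{1}{4\sqrt{2}n}+o(1/n)$ from Lemma~\ref{Lem:ExpectedValueHypersphere} and reads off the $1/(2n)$ term after the cancellation of the leading $2$.
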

\begin{proof}
Let us compute
$$
Var [\dist(x_0 , Y^{(n)}) ] = 
\mathbb{E}[(\dist(x_0 , Y^{(n)}))^2] - \left( \mathbb{E}[\dist(x_0 , Y^{(n)})] \right)^2.
$$
Since for every $\theta \in [ 0, 2\pi ]$ $$
\sin (\theta / 2 ) = \sqrt{\frac{1-\cos \theta}{2}},
$$
we have that
\begin{align*}
\mathbb{E} \left[ \left( \dist(x_0 , Y^{(n)})\right)^2 \right] & = \frac{\Gamma ( \frac{n}{2} ) }{ \sqrt{\pi} \Gamma ( \frac{n-1}{2} )} \int_0^{\pi} (2\sin((\theta/2))^2 (\sin (\theta))^{n-2} d\theta 
 \\
 & = \frac{2 \Gamma (  \frac{n}{2} ) }{ \sqrt{\pi} \Gamma ( \frac{n-1}{2} )} \left[ \int_0^{\pi} (\sin (\theta))^{n-2}  d\theta
 - \int_0^{\pi} \cos (\theta) (\sin (\theta))^{n-2} d\theta  \right]
\end{align*}
On the one hand, using the Beta function,
$$
\int_0^{\pi} (\sin (\theta))^{n-2}  d\theta =
2 \int_0^{\frac{\pi}{2}} (\sin (\theta))^{n-2}  d\theta 
  =  B \left( \frac{n-1}{2} , \frac{1}{2}  \right) = \frac{\sqrt{\pi} \Gamma \left( \frac{n-1}{2} \right)}{ \Gamma(n/2) }.
$$
On the other hand,
\begin{align*}
\int_0^{\pi} \cos (\theta) (\sin (\theta))^{n-2} d\theta  = 0.
\end{align*}
Therefore,
\begin{align*}
Var [\dist(x_0 , Y^{(n)}) ] & = \frac{2 \Gamma ( \frac{n}{2} ) }{ \sqrt{\pi} \Gamma ( \frac{n-1}{2} )} \left[ \frac{\sqrt{\pi} \Gamma ( \frac{n-1}{2} )}{ \Gamma(n/2) } \right] - \left( \frac{2^{n-1} \left( \Gamma(n/2) \right)^2}{\sqrt{\pi}\Gamma(n-1/2)} \right)^2
 \\
 & = 2 - \left( \frac{2^{n-1} \left( \Gamma(n/2) \right)^2}{\sqrt{\pi}\Gamma(n-1/2)} \right)^{2}.
\end{align*}
Furthermore, by Lemma \ref{Lem:ExpectedValueHypersphere} taking the limit $n \to \infty$, we obtain that
$$ \lim _{n \to \infty} Var [\dist(X^{(n)} , Y^{(n)})] = 0  $$
and
$$ \lim _{n \to \infty} 2n  Var [\dist(X^{(n)} , Y^{(n)})]= 1 .  $$
\end{proof}

Before proving Theorem \ref{Thm:CLThypersphere}, we need the following technical result (see \cite{S}, Theorem).

\begin{theorem}
\label{UniformIntegrabTheorem}
Let $( p_n )_{n \in \N}$ be a sequence of densities on $\R$ such that, 
\begin{align*}
\lim _{n \to \infty} p_n (x) = p (x)
\end{align*}
for almost every $x \in \R$. If $p$ is a density, then
\begin{align*}
\lim _{n \to \infty} 	\int_ S p_n(x) dx = 	\int_ S p(x) dx 
\end{align*}
uniformly for all Borel sets $S$ in $\R$.
\end{theorem}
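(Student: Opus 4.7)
The plan is to reduce the uniform convergence assertion to $L^1$-convergence of the densities, which is the content of Scheff\'e's lemma. Once $\|p_n - p\|_1 \to 0$ is established, uniformity over Borel sets is immediate from the triangle inequality.

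First I would prove that $p_n \to p$ in $L^1(\R)$. Define $h_n(x) := (p(x) - p_n(x))^+$. By hypothesis $h_n \to 0$ almost everywhere, and since $p_n \geq 0$ we have the pointwise bound
\begin{align*}
0 \leq h_n(x) \leq p(x),
\end{align*}
so $p$ serves as an integrable dominating function. The dominated convergence theorem then yields $\int_\R h_n(x)\,dx \to 0$. Because both $p$ and $p_n$ are densities, $\int_\R (p - p_n) = 0$, which forces the positive and negative parts of $p - p_n$ to have equal integral, and therefore
\begin{align*}
\|p_n - p\|_1 = \int_\R |p(x) - p_n(x)|\,dx = 2 \int_\R h_n(x)\,dx \longrightarrow 0.
\end{align*}

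Second, for any Borel set $S \subset \R$ the triangle inequality gives
\begin{align*}
\left| \int_S p_n(x)\,dx - \int_S p(x)\,dx \right| \leq \int_S |p_n(x) - p(x)|\,dx \leq \|p_n - p\|_1.
\end{align*}
Since the right-hand side is independent of $S$ and tends to zero, the convergence $\int_S p_n \to \int_S p$ is uniform in the choice of $S$.

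The only delicate step is the application of dominated convergence inside Scheff\'e's lemma: one needs to avoid trying to dominate $|p - p_n|$ directly (there is no obvious integrable majorant) and instead work with the positive part, which is automatically controlled by $p$. Everything else is routine manipulation.
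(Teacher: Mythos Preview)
Your proof is correct and is precisely the standard argument for Scheff\'e's lemma: dominate the positive part $(p-p_n)^+$ by $p$, apply dominated convergence, and use $\int(p-p_n)=0$ to double up to the full $L^1$-norm; uniformity over Borel sets then follows from the trivial bound $\bigl|\int_S(p_n-p)\bigr|\le\|p_n-p\|_1$.

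There is nothing to compare against in the paper itself: the authors do not prove this result but merely quote it from Scheff\'e's original 1947 paper \cite{S}. Your write-up is exactly the classical proof one finds there (and in most measure-theory texts), so in that sense your approach coincides with the cited source.
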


Now, we are ready to prove Theorem \ref{Thm:CLThypersphere}.
\begin{proof}[Proof of Theorem \ref{Thm:CLThypersphere}]
We write $\mu _n := \mathbb{E} [\dist(X^{(n)},Y^{(n)})]$ and $\sigma^2 _n := Var[\dist(X^{(n)},Y^{(n)})]$, whose values are provided by Lemmas \ref{Lem:ExpectedValueHypersphere} and \ref{Lem:VarianceHypersphere}. Then, note that
\begin{align*}
\frac{ \dist(X^{(n)},Y^{(n)}) -\mu _n}{ \sigma _n } = \frac{\dist(X^{(n)},Y^{(n)}) - \sqrt{2}}{\frac{1}{\sqrt{2n}}} \frac{1}{\sqrt{2n} \sigma _n} + \frac{\sqrt{2} - \mu_n}{\sigma _n}.
\end{align*}
By Slutsky's Theorem, to prove Theorem \ref{Thm:CLThypersphere}, it is enough to prove that
\begin{align*}
\frac{\dist(X^{(n)},Y^{(n)}) - \sqrt{2}}{\frac{1}{\sqrt{2n}}} \overset{d}{ \to } N(0,1),
\end{align*}
since by Lemmas \ref{Lem:ExpectedValueHypersphere} and \ref{Lem:VarianceHypersphere},
we have that $$ \left| \frac{\sqrt{2} - \mu _n}{\sigma_n} \right|  = \left| \frac{1}{4\sqrt{2}n \sigma _n} + \frac{1}{\sigma _n} o \left( \frac{1}{n} \right) \right| $$
and 
$$\lim _{n \to \infty} \sigma _n \sqrt{2n} = 1.$$

Note that the random variable $\dist(X^{(n)},Y^{(n)})$ takes values on the interval $[0,2]$. Thus, the normalized random variable $\sqrt{2n} \left( \dist(X^{(n)},Y^{(n)}) - \sqrt{2} \right) $ takes values of the interval $[-2\sqrt{n},2\sqrt{2n} -2\sqrt{n}  ]$.

By Lemma \ref{lem:DistribDistSphere} we have that for any $t \in [-2\sqrt{n},2\sqrt{2n} -2\sqrt{n}  ]$,
\begin{align*}
\mathbb{P} \left(\frac{\dist(X^{(n)},Y^{(n)}) - \sqrt{2}}{\frac{1}{\sqrt{2n}}} \leq t\right) & =
 \frac{\Gamma ( \frac{n}{2} ) }{ \sqrt{\pi} \Gamma ( \frac{n-1}{2} )} 
 \int_{ -\frac{t^2}{4n} - \frac{t}{\sqrt{n}}      }^{1} (1- x^2)^{(n-3)/2} dx .
\end{align*}
Let $t \in [-2\sqrt{n},2\sqrt{2n} -2\sqrt{n} ]$. We use the change of variables $\sqrt{n} x = y$, and obtain
\begin{align*}
\mathbb{P} \left(\frac{\dist(X^{(n)},Y^{(n)}) - \sqrt{2}}{\frac{1}{\sqrt{2n}}} \leq t\right) & =
 \frac{\Gamma ( \frac{n}{2} ) }{ \sqrt{\pi} \Gamma ( \frac{n-1}{2} ) \sqrt{n}} 
 \int_{ -\frac{t^2}{4\sqrt{n}} - t      }^{\sqrt{n}} \left(1- \frac{y^2}{n} \right)^{(n-3)/2} dy.
\end{align*}
Let $t \in \mathbb{R}$. Let $n_t := \inf \lbrace n  ; t \in [-2\sqrt{n},2\sqrt{2n} -2\sqrt{n}  ] \rbrace$. Then, note that for any $n \geq n_t$, since the integrand is non-negative and bounded above by $1$, we have 
\begin{align*}
& \mathbb{P} \left(\frac{\dist(X^{(n)},Y^{(n)}) - \sqrt{2}}{\frac{1}{\sqrt{2n}}} \leq t\right) \cr
& =  \frac{\Gamma ( \frac{n}{2} ) }{ \sqrt{\pi} \Gamma ( \frac{n-1}{2} ) \sqrt{n}} 
 \int_{ -\frac{t^2}{4\sqrt{n}} - t}^{\sqrt{n}} \left(1- \frac{y^2}{n}\right)^{(n-3)/2} \mathds{1}_{[-\sqrt{n} , \sqrt{n}]} (y) dy   \cr
& = \frac{\Gamma ( \frac{n}{2} ) }{ \sqrt{\pi} \Gamma ( \frac{n-1}{2} ) \sqrt{n}}  \Bigg(  \int_{ -t}^{\sqrt{n}} \left(1- \frac{y^2}{n}\right)^{(n-3)/2} \mathds{1}_{[-\sqrt{n} , \sqrt{n}]} (y) dy   + \int_{-\frac{t^2}{4\sqrt{n}} - t }^{-t} \left(1- \frac{y^2}{n}\right)^{(n-3)/2} \mathds{1}_{[-\sqrt{n} , \sqrt{n}]} (y)  dy  \Bigg) \cr
& = \frac{\Gamma ( \frac{n}{2} ) }{ \sqrt{\pi} \Gamma ( \frac{n-1}{2} ) \sqrt{n}}  \Bigg(  \int_{-t}^{\infty} \left(1- \frac{y^2}{n}\right)^{(n-3)/2} \mathds{1}_{[-\sqrt{n} , \sqrt{n}]} (y) dy + \int_{-\frac{t^2}{4\sqrt{n}} - t }^{-t} \left(1- \frac{y^2}{n}\right)^{(n-3)/2} \mathds{1}_{[-\sqrt{n} , \sqrt{n}]} (y) dy \Bigg) \cr
& \leq  \frac{\Gamma ( \frac{n}{2} ) }{ \sqrt{\pi} \Gamma ( \frac{n-1}{2} ) \sqrt{n}}  \Bigg(  \int_{-t}^{\infty} \left(1- \frac{y^2}{n}\right)^{(n-3)/2} \mathds{1}_{[-\sqrt{n} , \sqrt{n}]} (y) dy + \frac{t^2}{4\sqrt{n}} \Bigg).
\end{align*}
We notice that
\begin{align*}
f_n (y) : = \frac{\Gamma ( \frac{n}{2} ) }{ \sqrt{\pi} \Gamma ( \frac{n-1}{2} ) \sqrt{n}}
\left(1- \frac{y^2}{n}\right)^{(n-3)/2} \mathds{1}_{[-\sqrt{n} , \sqrt{n}]} (y)
\end{align*}
is a probability density for every $n \in \N$, and that $f_n \to \frac{1}{\sqrt{2 \pi}} e^{-y^2 / 2}$ pointwise, since taking limits $n \to \infty$, for every $y \in \mathbb{R}$,
$$ \displaystyle{\lim _{n \to \infty} \frac{\Gamma ( \frac{n}{2} ) }{ \sqrt{\pi} \Gamma ( \frac{n-1}{2} ) \sqrt{n}} = \frac{1}{\sqrt{2\pi}}}$$
and
$$ \displaystyle{\lim _{n \to \infty} \left(1- \frac{y^2}{n}\right)^{(n-3)/2} = e^{\frac{-y^2}{2}}} .$$
Then, by Theorem \ref{UniformIntegrabTheorem}, we have that for any $t \in \R$,

\begin{align*}
& \lim _{n \to \infty} \mathbb{P} \left(\frac{\dist(X^{(n)},Y^{(n)}) - \sqrt{2}}{\frac{1}{\sqrt{2n}}} \leq t\right) \cr
& \leq \lim _{n \to \infty} \frac{\Gamma ( \frac{n}{2} ) }{ \sqrt{\pi} \Gamma ( \frac{n-1}{2} ) \sqrt{n}}  \Bigg(  \int_{-t}^{\infty} \left(1- \frac{y^2}{n}\right)^{(n-3)/2} \mathds{1}_{[-\sqrt{n} , \sqrt{n}]} (y) dy + \frac{t^2}{4\sqrt{n}} \Bigg) \cr
& = \lim _{n \to \infty} \frac{\Gamma ( \frac{n}{2} ) }{ \sqrt{\pi} \Gamma ( \frac{n-1}{2} ) \sqrt{n}}   \int_{-t}^{\infty} \lim _{n \to \infty} \left(1- \frac{y^2}{n}\right)^{(n-3)/2} \mathds{1}_{[-\sqrt{n} , \sqrt{n}]} (y) dy   \cr
& = \frac{1}{\sqrt{2 \pi}} 
 \int_{ - t }^{\infty} e^{-\frac{y^2}{2}} dy.
\end{align*}
For the lower bound, note that if $n \geq  n_t$ 
\begin{align*}
& \mathbb{P} \left(\frac{\dist(X^{(n)},Y^{(n)}) - \sqrt{2}}{\frac{1}{\sqrt{2n}}} \leq t\right) \cr
& =  \frac{\Gamma ( \frac{n}{2} ) }{ \sqrt{\pi} \Gamma ( \frac{n-1}{2} ) \sqrt{n}} 
 \int_{ -\frac{t^2}{4\sqrt{n}} - t}^{\sqrt{n}} \left(1- \frac{y^2}{n}\right)^{(n-3)/2} dy   \cr
& = \frac{\Gamma ( \frac{n}{2} ) }{ \sqrt{\pi} \Gamma ( \frac{n-1}{2} ) \sqrt{n}}  \Bigg(  \int_{ -t}^{\sqrt{n}} \left(1- \frac{y^2}{n}\right)^{(n-3)/2} dy   + \int_{-\frac{t^2}{4\sqrt{n}} - t }^{-t} \left(1- \frac{y^2}{n}\right)^{(n-3)/2} dy \Bigg) \cr
& = \frac{\Gamma ( \frac{n}{2} ) }{ \sqrt{\pi} \Gamma ( \frac{n-1}{2} ) \sqrt{n}}  \Bigg(  \int_{-t}^{\infty} \left(1- \frac{y^2}{n}\right)^{(n-3)/2} \mathds{1}_{[-\sqrt{n} , \sqrt{n}]} (y) dy + \int_{-\frac{t^2}{4\sqrt{n}} - t }^{-t} \left(1- \frac{y^2}{n}\right)^{(n-3)/2} dy \Bigg) \cr
& \geq  \frac{\Gamma ( \frac{n}{2} ) }{ \sqrt{\pi} \Gamma ( \frac{n-1}{2} ) \sqrt{n}}   \int_{-t}^{\infty} \left(1- \frac{y^2}{n}\right)^{(n-3)/2} \mathds{1}_{[-\sqrt{n} , \sqrt{n}]} (y) dy .
\end{align*}
Since we already proved that the latter expression converges to $\int_{ - t }^{\infty} e^{-\frac{y^2}{2}} dy$, we obtain that
\begin{align*}
& \lim _{n \to \infty} \mathbb{P} \left(\frac{\dist(X^{(n)},Y^{(n)}) - \sqrt{2}}{\frac{1}{\sqrt{2n}}} \leq t\right) \cr
& \geq  \lim _{n \to \infty} \frac{\Gamma ( \frac{n}{2} ) }{ \sqrt{\pi} \Gamma ( \frac{n-1}{2} ) \sqrt{n}}  \int_{-t}^{\infty} \left( 1- \frac{y^2}{n} \right)^{(n-3)/2} \mathds{1}_{[-\sqrt{n} , \sqrt{n}]} (y) dy \cr
& =  \frac{1}{\sqrt{2 \pi}} 
 \int_{ - t }^{\infty} e^{-\frac{y^2}{2}} dy .
\end{align*}
Thus, for every $t\in \R$,
\begin{align*}
\lim _{n \to\infty } \mathbb{P} \left(\frac{\dist(X^{(n)},Y^{(n)}) - \sqrt{2}}{\frac{1}{\sqrt{2n}}} \leq t\right) & =
 \frac{1}{\sqrt{2 \pi}} 
 \int_{ - t }^{\infty} e^{-\frac{x^2}{2}} dx
 \\
 & = \frac{1}{\sqrt{2 \pi}} 
 \int_{ - \infty }^{t} e^{-\frac{x^2}{2}} dx = \mathbb{P} \left(g \leq t\right),
\end{align*}
where $g \sim N(0,1)$.
\end{proof}

\section{Central Limit Theorems}
\label{section4}

In this section, we are going to prove the Central Limit Theorems \ref{Thm:CLTforlpballs} and \ref{Thm:CLTforCube}. We divide the section into two subsections. The first one is devoted to random vectors uniformly distributed on $l_p^n$-balls $(1 \leq p < \infty)$ or on their boundaries according to the cone measure, in order to prove Theorem \ref{Thm:CLTforlpballs}, and the second one is devoted to the case of the cube (Theorem \ref{Thm:CLTforCube}).

\subsection{CLT for $l_p^n-$balls}
Let $X^{(n)},Y^{(n)}$ be independent random vectors uniformly distributed on the boundary of $B_p^n$. By the Schechtman-Zinn representation (Lemma \ref{Lem:SchAndZinn}), 
\begin{align*}
X^{(n)} \overset{d}{=} \frac{G}{\Vert G \Vert _p} \; \text{ and } \; Y^{(n)} \overset{d}{=} \frac{G'}{\Vert G' \Vert _p},
\end{align*}
where $G=(g_1,...,g_n)$ and $G'=(g'_1,...,g'_n)$ are independent $p$-generalized Gaussian vectors on $\R^n$, that is, their components are $p$-generalized Gaussian random variables on $\R$.
Then, we can see the distance between random vectors uniformly distributed on the boundary of $B_p^n$, normalized by $n^{1/p - 1/2}$, as 
\begin{align*}
\dist( X^{(n)} , Y^{(n)}  ) & = n^{1/p - 1/2} \left\Vert \frac{G}{\Vert G \Vert_p} - \frac{G'}{\Vert G' \Vert_p} \right\Vert _2 
 = n^{1/p - 1/2} \left\Vert \frac{(g_1,...,g_n)}{\left(\sum _{i=1}^n \left| g_i  \right|^p \right)^{1/p}} - 
 \frac{(g'_1,...,g'_n)}{\left(\sum _{i=1}^n \left| g_i'  \right|^p \right)^{1/p}} \right\Vert _2
 \\
 & = \frac{n^{1/p - 1/2}}{\left(\sum _{i=1}^n \left| g_i  \right|^p \right)^{1/p}} \left\Vert (g_1,...,g_n) - 
 \frac{\left(\sum _{i=1}^n \left| g_i  \right|^p \right)^{1/p} (g'_1,...,g'_n)}{\left(\sum _{i=1}^n \left| g_i'  \right|^p \right)^{1/p}} \right\Vert _2
 \\
 & =  \frac{n^{1/p - 1/2}}{\left(\sum _{i=1}^n \left| g_i  \right|^p \right)^{1/p}} \left\Vert   \left( g_i - 
 \frac{\left(\sum _{i=1}^n \left| g_i  \right|^p  \right)^{1/p} }{\left(\sum _{i=1}^n \left| g_i'  \right|^p \right)^{1/p}} g_i' \right)_{i=1}^n \right\Vert _2.
\end{align*}
Note that by the Strong law of large numbers (SLLN), 
\begin{align*}
\frac{\left(\sum _{i=1}^n \left| g_i  \right|^p  \right)^{1/p} }{\left(\sum _{i=1}^n \left| g_i'  \right|^p \right)^{1/p}} \overset{a.s.}{\longrightarrow} 1 .
\end{align*}
Thus, the idea of the proof will be to first prove that $\dist( X^{(n)} , Y^{(n)} )$ and 
\begin{align}
\label{ExpSimplifCLTboundarypballs}
\frac{n^{1/p-1/2}}{\left(\sum _{i=1}^n \left| g_i  \right|^p \right)^{1/p}} \left\Vert   \left( g_i - 
   g_i' \right)_{i=1}^n \right\Vert _2,
\end{align}
have the same asymptotic distribution, and then prove a CLT for \eqref{ExpSimplifCLTboundarypballs}. This will imply that $\dist(X^{(n)} , Y^{(n)})$ satisfies the same CLT.

\begin{lemma}
\label{LemEquivalentCLTBoundarypball}
Let $1\leq p <\infty$ and let $g_i, g_i'$ with $i=1,...,n$, be independent $p-$generalized Gaussian random variables. Then,
\begin{align*}
\frac{n^{1/p - 1/2}}{\left(\sum _{i=1}^n \left| g_i  \right|^p \right)^{1/p}} \left\Vert   \left( g_i - 
 \frac{\left(\sum _{i=1}^n \left| g_i  \right|^p  \right)^{1/p} }{\left(\sum _{i=1}^n \left| g_i'  \right|^p \right)^{1/p}} g_i' \right)_{i=1}^n \right\Vert _2 \overset{d}{\sim} \frac{n^{1/p - 1/2}}{\left(\sum _{i=1}^n \left| g_i  \right|^p \right)^{1/p}} \left\Vert   \left( g_i - 
   g_i' \right)_{i=1}^n \right\Vert _2 .
\end{align*}
\end{lemma}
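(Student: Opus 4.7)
The plan is to apply the reverse triangle inequality in $\ell_2^n$ to reduce the problem to controlling the size of a single deviation, and then to kill that deviation via the strong law of large numbers (SLLN). Set
\[
A_n := \left(\sum_{i=1}^n |g_i|^p\right)^{1/p}, \qquad B_n := \left(\sum_{i=1}^n |g'_i|^p\right)^{1/p}, \qquad R_n := A_n/B_n.
\]
The vectors inside the two $\ell_2$-norms differ coordinate-wise by $(g_i - R_n g'_i) - (g_i - g'_i) = (1 - R_n) g'_i$, so the reverse triangle inequality yields
\[
\left|\frac{n^{1/p-1/2}}{A_n}\|(g_i - R_n g'_i)_{i=1}^n\|_2 - \frac{n^{1/p-1/2}}{A_n}\|(g_i - g'_i)_{i=1}^n\|_2\right| \le |R_n - 1|\,\frac{n^{1/p-1/2}\,\|G'\|_2}{A_n}.
\]

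The plan is then to show that the right-hand side tends to $0$ almost surely, which implies convergence in probability and hence asymptotic equivalence in distribution. For the first factor, the SLLN applied to the i.i.d.\ sequences $(|g_i|^p)_i$ and $(|g'_i|^p)_i$ gives
\[
\frac{A_n^p}{n} \xrightarrow{a.s.} \E|g|^p, \qquad \frac{B_n^p}{n} \xrightarrow{a.s.} \E|g'|^p,
\]
and since both limits coincide and are strictly positive (recall $g$ and $g'$ have the same distribution and nontrivial $p$-th moment), continuity of $x\mapsto x^{1/p}$ gives $R_n \xrightarrow{a.s.} 1$, hence $|R_n - 1| \xrightarrow{a.s.} 0$. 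For the second factor, writing
\[
\frac{n^{1/p-1/2}\,\|G'\|_2}{A_n} = \frac{\|G'\|_2/\sqrt{n}}{A_n/n^{1/p}},
\]
the SLLN applied to $(|g'_i|^2)_i$ shows $\|G'\|_2/\sqrt{n} \xrightarrow{a.s.} \sqrt{\E|g|^2}$, while the denominator converges a.s.\ to a positive constant as above, so the whole ratio converges a.s.\ to a finite positive constant.

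Combining the two, the bounding quantity is the product of an a.s.\ null sequence and an a.s.\ bounded (in fact convergent) one, so it vanishes almost surely, in particular in probability. This is precisely the asymptotic equivalence $\overset{d}{\sim}$ defined in Section~\ref{section2}. There is no real obstacle here beyond carefully isolating the perturbation $(1-R_n)g'_i$ in the coordinates; the SLLN then does all of the work, and no delicate CLT-type control of the remainder is needed at this stage (the CLT itself will be invoked afterwards on the simplified expression \eqref{ExpSimplifCLTboundarypballs}).
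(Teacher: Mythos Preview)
Your proof is correct and follows essentially the same approach as the paper: bound the difference of the two $\ell_2$-norms by $|R_n-1|\,\|G'\|_2$, then use the SLLN to show this vanishes a.s.\ after normalization. The only difference is that you obtain the key inequality directly from the reverse triangle inequality, whereas the paper reaches the identical bound via the mean value theorem applied to $f(x)=\|G-xG'\|_2$ followed by Cauchy--Schwarz; your route is slightly more elementary and avoids the (minor) technicality of checking differentiability of $f$.
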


\begin{proof}
Let $G=(g_1,...,g_n)$ and $G'=(g'_1,...,g'_n)$ be independent $p$-generalized Gaussian vectors on $\R^n$.
Take $A_n := \frac{\Vert G \Vert _p}{\Vert G' \Vert _p} $ and $f(x) := \Vert G - xG' \Vert_2$, $\forall x \in \R$. By the mean value theorem, for almost every value of $G$ and $G'$ there exists some $\xi \in \R$ such that
\begin{align*}
f(1) - f(A_n) =  f'(\xi) (1- A_n).
\end{align*}
Note that $f'$ is differentiable for all $x$ except possibly when $G = xG'$. This event has probability $0$. Therefore, with probability $1$, $f$ is differentiable everywhere, and the mean value theorem applies.
Then,
\begin{align*}
\Vert G - G' \Vert_2 - \Vert G - A_n G' \Vert_2  = \left\langle \frac{G-\xi G'}{\Vert G - \xi G' \Vert} ,  (1- A_n) G' \right\rangle,
\end{align*}
for some $\xi \in \R$. Then, using Cauchy-Schwarz inequality, we obtain that
\begin{align*}
\left| \Vert G - G' \Vert_2 - \Vert G - A_n G' \Vert_2 \right|  \leq  | 1- A_n| \Vert G' \Vert_2.
\end{align*}
Equivalently,
\begin{align*}
\left| \; n^{-1/2}\left\Vert  G - A_n G' \right\Vert_2 - n^{-1/2} \left\Vert  G - G'  \right\Vert_2 \right| \leq n^{-1/2} | 1- A_n| \Vert G' \Vert_2.
\end{align*}
Let us bound this last expression. For the first factor, by the SLLN,
\begin{align*}
A_n = \frac{\Vert G \Vert _p}{\Vert G' \Vert _p}  \overset{a.s.}{\to} 1,
\end{align*}
therefore,
\begin{align*}
|1-A_n| \overset{a.s.}{\to} 0.
\end{align*}
For the second factor, again by the SLLN, there exists come constant $C:=\sqrt{M_p(2)}>0$ such that
\begin{align*}
n^{-1/2} \Vert G' \Vert _2  \overset{a.s.}{\longrightarrow} C .
\end{align*}
Then, it is clear that
\begin{align*}
\left| \; n^{-1/2}\left\Vert  G - A_n G' \right\Vert_2 - n^{-1/2} \left\Vert  G - G'  \right\Vert_2 \right| \overset{a.s.}{\to} 0.
\end{align*}
It follows that
\begin{align*}
 n^{-1/2}\left\Vert  G - A_n G' \right\Vert_2  \overset{d}{\sim} n^{-1/2} \left\Vert  G - G'  \right\Vert_2 .
\end{align*}
Therefore, since $n^{1/p} \Vert G \Vert _p^{-1} \overset{a.s.}{\to} M_p(p) ^{-1/p} >0$, by Slutsky Theorem we obtain that
\begin{align*}
 n^{1/p-1/2} \frac{1}{\Vert G \Vert_p} \left\Vert  G - A_n G' \right\Vert_2  \overset{d}{\sim} n^{1/p-1/2} \frac{1}{\Vert G \Vert_p} \left\Vert  G - G'  \right\Vert_2 .
\end{align*}
\end{proof}

Let us prove Thorem \ref{Thm:CLTforlpballs} in the case that $X^{(n)}, Y^{(n)} \sim \text{Unif} ( \partial B_p^ n)$.

\begin{proof}[Proof of Theorem \ref{Thm:CLTforlpballs} (boundary case)]
Let us define the function $f: \R^2 \to \R$ given by
\begin{align*}
f(x,y) =  x^{1/2} \frac{1}{y^{1/p}}.
\end{align*} 
First, note that
\begin{align*}
n^{1/p - 1/2} \frac{1}{\Vert G \Vert _p} \Vert G - G' \Vert _2 = f \left( \frac{1}{n} \sum _{i=1}^n \left( \left| g_i - g_i' \right|^2 , |g_i|^p \right)  \right).
\end{align*}
Take $S_{1,n} = \frac{1}{n} \sum _{i=1}^n  \left| g_i - g_i' \right|^2$ and $S_{2,n} = \frac{1}{n} \sum _{i=1}^n  \left| g_i  \right|^p$. Thus, by Lemma \ref{LemEquivalentCLTBoundarypball} we have that 
\begin{align*}
n^{1/p - 1/2} \Vert X^{(n)} - Y^{(n)} \Vert _2 \overset{d}{\sim} f\left( (S_{1,n} , S_{2,n}) \right).
\end{align*}
By the classical CLT, $\left( \sqrt{n} ( S_{1,n} - \E [S_{1,n}]), \sqrt{n} ( S_{2,n} - \E [S_{2,n}]) \right)$ converges in distribution to $N(0, \Sigma )$, where $\Sigma$ is the covariance matrix of $(S_{1,n},S_{2,n})$. Thus, applying the Delta method (Lemma \ref{DeltaMethod}) to $X_n = (S_{1,n} , S_{2,n})$ and the function $f$, we have that 
\begin{align*}
\sqrt{n} \left( f(X_n) - f\left( \E [X_n] \right) \right) \overset{d}{\to} N \left(0, J_f \Sigma J_f^T \right),
\end{align*}
where $J_f$ is the Jacobian matrix of $f$ and $\E[X_n] = (2M_p(2) , M_p(p))$.

Let us compute the covariance matrix 
\[
   \Sigma = 
  \left[ {\begin{array}{cc}
   \var[|g_1 - g_1'|^2] & \cov[|g_1 - g_1'|^2 , |g_1|^p] \\
   \cov[|g_1 - g_1'|^2 , |g_1|^p] &  \var[|g_1|^p] \\
  \end{array} } \right].
\]
For the first term,
\begin{align*}
\var \left[ \left| g_i - g_i' \right|^2   \right] & = \var \left[ | g_1 |^2 + | g_1' |^2
- 2 g_1 g_1'   \right]
 \\
 & = \E \left[ \left( | g_1 |^2 + | g_1' |^2 - 2 g_1 g_1' \right) ^2 \right] -
\left( \E \left[ | g_1 |^2 + | g_1' |^2 - 2 g_1 g_1'   \right] \right)^2
 \\
 & = \E \left[ |g_1|^4 + |g_1'|^4 + 6 |g_1|^2 |g_1'|^2 -4|g_1|^2 g_1g_1' -4|g_1'|^2 g_1g_1'   \right]
 - \left( 2 \E \left[ |g_1|^2  \right] \right)^2
 \\
 & = 2 \E \left[ |g_1|^4 \right] + 6 E\left[ |g_1|^2 \right] \E \left[ |g_1'|^2 \right] - \left( 2 \E \left[ |g_1|^2  \right] \right)^2
 \\
 & = 2 \E \left[ |g_1|^4 \right] + 2 \left( \E \left[ |g_1|^2  \right] \right)^2
 \\
 & = 2 M_p(4) + 2 (M_p(2))^2.
\end{align*}
For the fourth term,
\begin{align*}
\var \left[ \left| g_i\right|^p   \right] = \E \left[  | g_1 |^{2p} \right] -
\left( \E \left[ | g_1 |^p   \right] \right)^2 & = \frac{\Gamma \left( \frac{2p + 1}{p} \right)}{\Gamma \left( \frac{1}{p} \right)} - \left( \frac{\Gamma \left( \frac{p+1}{p} \right)}{\Gamma \left( \frac{1}{p} \right)} \right)^2 
 \\
  & = \frac{1}{p} = M_p(p).
\end{align*}
For the remaining terms, note that
\begin{align*}
\cov[\left| g_i - g_i'\right|^2 , \left| g_i\right|^p ] & = \E [\left| g_i - g_i'\right|^2 \left| g_i\right|^p  ] - \E [\left| g_i - g_i'\right|^2  ] \E [ \left| g_i\right|^p  ].
\end{align*}
Let us compute both summands. For the second one,
\begin{align*}
\E [\left| g_i - g_i'\right|^2] = 2\E [\left| g_1 \right|^2] = 2 \frac{\Gamma \left( \frac{3}{p} \right)}{\Gamma \left( \frac{1}{p} \right)}.
\end{align*}
and
\begin{align*}
\E [\left| g_i \right|^p] = \frac{\Gamma \left( \frac{p+1}{p} \right)}{\Gamma \left( \frac{1}{p} \right)} = \frac{1}{p}.
\end{align*}
For the first one,
\begin{align*}
\E [ \left| g_i - g_i'\right|^2  \left| g_i \right|^p] & = \E[ |g_1|^{p+2} + |g_1|^p|g_1'|^2 - 2g_1 g_1' |g_1|^p ]
 \\
  & =  \E[ |g_1|^{p+2}] +  \E[ |g_1|^{p}]  \E[ |g_1'|^{2}]
 \\
  & = \frac{\Gamma \left( \frac{p+3}{p} \right)}{\Gamma \left( \frac{1}{p} \right)} + \frac{\Gamma \left( \frac{p+1}{p} \right)}{\Gamma \left( \frac{1}{p} \right)} \frac{\Gamma \left( \frac{3}{p} \right)}{\Gamma \left( \frac{1}{p} \right)} = \frac{4\Gamma \left( \frac{p+3}{p} \right)}{3\Gamma \left( \frac{1}{p} \right)} .
\end{align*}
Therefore,
\begin{align*}
\cov[\left| g_i - g_i'\right|^2 , \left| g_i\right|^p ] & = \frac{4\Gamma \left( \frac{p+3}{p} \right)}{3\Gamma \left( \frac{1}{p} \right)} - 2 \frac{\Gamma \left( \frac{3}{p} \right)}{\Gamma \left( \frac{1}{p} \right)} \frac{1}{p}
 \\
 & = \frac{2 \Gamma \left( \frac{p+3}{p} \right)}{3\Gamma \left( \frac{1}{p} \right)} = \frac{2}{3} M_{p} (p+2).
\end{align*}
Thus, we obtain the covariance matrix
\[
   \Sigma = 
  \left[ {\begin{array}{cc}
   2 M_p(4) + 2 (M_p(2))^2 & \frac{2}{3} M_{p} (p+2) \\
   \frac{2}{3} M_{p} (p+2)  &  M_p(p) \\
  \end{array} } \right].
\]
For the last step, let us compute the Jacobian of $f: \R^2 \to \R$ given by
\begin{align*}
f(x,y) =  x^{1/2} \frac{1}{y^{1/p}}  .
\end{align*} 
It is clear that 
\[
   J_f (x,y) = 
  \left[ {\begin{array}{cc}
   \frac{1}{2\sqrt{x}} \frac{1}{y^{1/p}}  & \frac{-\sqrt{2}}{p} \frac{\sqrt{x}}{y^{1+ 1/p}}  \\
  \end{array} } \right] .
\]
Thus, the Jacobian matrix of $f$ evaluated at $\E[X_n] = \left( 2 M_p(2) , 1/p \right)$ is
\[
   J_g = 
  \left[ {\begin{array}{cc}
   \frac{p^{1/p} }{2 \sqrt{2} \sqrt{M_p(2)}}  & - \sqrt{2}  p^{1/p} \sqrt{M_p(2)} \\
  \end{array} } \right] .
\]
Then, applying elementary computations, one can check that the vaue of $\sigma_p ^2$ is given by
\begin{align*}
\sigma_p ^2 = J_g \Sigma J_g ^T = \frac{p^{2/p} \left( M_p(4) + (M_p(2))^2 \right)}{4 M_p(2)}.
\end{align*}
\end{proof}

Now, let us work on the case where $X^{(n)},Y^{(n)}$ are random vectors uniformly distributed on $B_p^n$. By the Schechtman-Zinn representation (Lemma \ref{Lem:SchAndZinn}), 
\begin{align*}
X^{(n)} \overset{d}{=} U^{1/n} \frac{G}{\Vert G \Vert _p} \; \text{ and } \; Y^{(n)} \overset{d}{=} U'^{1/n} \frac{G'}{\Vert G' \Vert _p},
\end{align*}
where $G=(g_1,...,g_n)$ and $G'=(g'_1,...,g'_n)$ are independent $p$-generalized Gaussian vectors on $\R^n$, and $U,U'$ are uniformly distributed in the interval $[0,1]$ and independent of $G$ and $G'$.
Then, similarly to the previous case, we can see the distance between random vectors uniformly distributed on $B_p^n$, normalized by $n^{1/p - 1/2}$, as
\begin{align*}
n^{1/p - 1/2}\Vert X - Y \Vert _2 & = \left\Vert U^{1/n} \frac{G}{\Vert G \Vert_p} -U'^{1/n} \frac{G'}{\Vert G' \Vert_p} \right\Vert _2 
 \\
 & = n^{1/p - 1/2} \frac{U^{1/n}}{\Vert G \Vert _p} \left\Vert G-  \frac{U'^{1/n}}{U^{1/n}} \frac{\Vert G \Vert_p}{\Vert G' \Vert_p} G' \right\Vert _2 ,
\end{align*}
where there is an extra $U^{1/n}$ factor.
Note that since $U^{1/n}, U'^{1/n} \overset{a.s.}{\to} 1$ and $U^{1/n}, U'^{1/n}>0$ almost surely, we have that
\begin{align*}
\frac{U'^{1/n}}{U^{1/n}} \overset{a.s.}{\to} 1 .
\end{align*}
Thus, again we will first prove that $\dist( X^{(n)} , Y^{(n)} )$ and 
\begin{align}
\label{ExpSimplifCLTpballs}
\frac{n^{1/p-1/2}}{\left(\sum _{i=1}^n \left| g_i  \right|^p \right)^{1/p}} U^{1/n} \left\Vert   \left( g_i -  g_i' \right)_{i=1}^n \right\Vert _2,
\end{align}
have the same asymptotic distribution, and then prove a CLT for \eqref{ExpSimplifCLTpballs}. This will imply that $\dist(X^{(n)} , Y^{(n)})$ satisfies the same CLT.

\begin{lemma}
\label{LemEquivalentCLTpball}
Let $1\leq p <\infty$ and let $g_i, g_i'$ with $i=1,...,n$, be independent $p-$generalized Gaussian random variables and $U,U'$ be independent uniform random variables on the interval $[0,1]$. Then,
\begin{align*}
n^{1/p - 1/2} \frac{U^{1/n}}{\Vert G \Vert _p} \left\Vert G-  \frac{U'^{1/n}}{U^{1/n}} \frac{\Vert G \Vert_p}{\Vert G' \Vert_p} G' \right\Vert _2 \overset{d}{\sim} n^{1/p - 1/2} \frac{U^{1/n}}{\Vert G \Vert _p} \left\Vert G-  G' \right\Vert _2 .
\end{align*}
\end{lemma}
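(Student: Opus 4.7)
The plan is to mirror the proof of Lemma \ref{LemEquivalentCLTBoundarypball}, absorbing the two additional random factors $U^{1/n}$ and $U'^{1/n}$ into the coefficient that multiplies $G'$. Concretely, I would set
\begin{align*}
A_n := \frac{U'^{1/n}}{U^{1/n}}\cdot\frac{\Vert G\Vert_p}{\Vert G'\Vert_p},
\end{align*}
and once again consider $f(x) := \Vert G - xG'\Vert_2$, which is differentiable at every $x$ such that $G\neq xG'$, an event of full probability. Applying the mean value theorem between $x=1$ and $x=A_n$ and using the Cauchy--Schwarz inequality (exactly as in the proof of Lemma \ref{LemEquivalentCLTBoundarypball}) yields
\begin{align*}
\bigl|\,\Vert G - G'\Vert_2 - \Vert G - A_n G'\Vert_2\,\bigr|\;\le\; |1-A_n|\,\Vert G'\Vert_2.
\end{align*}

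Next I would multiply both sides by the positive factor $n^{1/p-1/2}U^{1/n}/\Vert G\Vert_p$, so that the problem reduces to showing
\begin{align*}
|1-A_n|\cdot U^{1/n}\cdot\frac{n^{1/p}}{\Vert G\Vert_p}\cdot\frac{\Vert G'\Vert_2}{\sqrt{n}}\;\xrightarrow{\;a.s.\;}\;0.
\end{align*}
For the three rightmost factors this is routine: $U^{1/n}\to 1$ a.s.\ (since $\tfrac{1}{n}\log U\to 0$ a.s.), while the SLLN gives $n^{1/p}/\Vert G\Vert_p\to M_p(p)^{-1/p}$ and $n^{-1/2}\Vert G'\Vert_2\to \sqrt{M_p(2)}$ almost surely. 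For the factor $|1-A_n|$, the same SLLN-based argument used in the boundary case gives $\Vert G\Vert_p/\Vert G'\Vert_p\to 1$ a.s., and combining with $U^{1/n},U'^{1/n}\to 1$ a.s.\ (both are positive a.s., so the quotient is well-defined and also converges to $1$) yields $A_n\to 1$ a.s., hence $|1-A_n|\to 0$ a.s.

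Putting these pieces together, the difference of the two quantities in the statement tends to zero almost surely, and therefore in probability, which is exactly the definition of $\overset{d}{\sim}$ used in the paper. The main (and only) point that goes beyond Lemma \ref{LemEquivalentCLTBoundarypball} is justifying $U'^{1/n}/U^{1/n}\to 1$ a.s.; this is immediate once one observes that both $U$ and $U'$ are strictly positive a.s., so each factor tends to $1$ almost surely and the ratio inherits the convergence. No new analytic input is needed beyond Slutsky's theorem and the strong law.
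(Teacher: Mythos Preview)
Your proposal is correct and follows essentially the same route as the paper: set $A_n = \frac{U'^{1/n}}{U^{1/n}}\frac{\Vert G\Vert_p}{\Vert G'\Vert_p}$, apply the mean value theorem to $f(x)=\Vert G-xG'\Vert_2$ together with Cauchy--Schwarz, and then use $U^{1/n},U'^{1/n}\to 1$ a.s.\ plus the SLLN to conclude that the difference tends to $0$ a.s. The only cosmetic difference is that the paper first proves the equivalence for $n^{-1/2}\Vert G-A_nG'\Vert_2$ and then invokes Slutsky to attach the factor $n^{1/p}U^{1/n}/\Vert G\Vert_p$, whereas you multiply through by the full prefactor directly; both arguments are equivalent.
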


\begin{proof}
The argument is the same as in the proof of Lemma \ref{LemEquivalentCLTBoundarypball}, replacing
\[
A_n=\frac{\|G\|_p}{\|G'\|_p}
\quad\text{by}\quad
A_n=\frac{U'^{1/n}}{U^{1/n}}\frac{\|G\|_p}{\|G'\|_p}.
\]
In particular, defining $f(x)=\|G-xG'\|_2$, the same mean value theorem argument yields
\[
\big|\,n^{-1/2}\|G-A_nG'\|_2-n^{-1/2}\|G-G'\|_2\,\big|
\le |1-A_n|\,n^{-1/2}\|G'\|_2
\qquad\text{a.s.}
\]
By the SLLN, $\|G\|_p/\|G'\|_p\to 1$ a.s. and $n^{-1/2}\|G'\|_2\to \sqrt{M_p(2)}$ a.s.
Moreover, $U^{1/n}\to 1$ and $U'^{1/n}\to 1$ a.s., hence $A_n\to 1$ a.s., and therefore $|1-A_n|\to 0$ a.s..
Consequently, the right-hand side converges to $0$ a.s., which implies that
\[
n^{-1/2}\|G-A_nG'\|_2 \overset{d}{\sim} n^{-1/2}\|G-G'\|_2.
\]
Finally, since $n^{1/p}\|G\|_p^{-1}\to M_p(p)^{-1/p}>0$ a.s., an application of Slutsky's theorem gives
\[
n^{1/p-1/2}\frac{U^{1/n}}{\|G\|_p}\|G-A_nG'\|_2
\overset{d}{\sim}
n^{1/p-1/2}\frac{U^{1/n}}{\|G\|_p}\|G-G'\|_2.
\]
\end{proof}

Now, we are ready to prove Theorem \ref{Thm:CLTforlpballs}.
\begin{proof}[Proof of Theorem \ref{Thm:CLTforlpballs} ($B_p^n$-case)]
Let $X^{(n)}, Y^{(n)} \sim \text{Unif} (B_p^n)$ be independent. By Lemma \ref{LemEquivalentCLTpball}, it is enough to prove a CLT for \eqref{ExpSimplifCLTpballs}.
In particular, notice that 
\begin{align*}
U^{1/n} = e^{\frac{1}{n} \log (U)} \overset{d}{=} e^{-\frac{E}{n}},
\end{align*}
where $E$ is an exponential random variable with mean $1$. Therefore, using the series expansion of the exponential,
\begin{align*}
 e^{-\frac{E}{n}} = 1- \frac{E}{n} + R_n,
\end{align*}
where $R_n$ denotes the Taylor reminder term, and $|R_n| \leq E^2 / 2n^2$. Then,
\begin{align*}
e^{-\frac{E}{n}} = 1- \frac{E}{n} + O _E \left( \frac{1}{n^2} \right),
\end{align*}
where the Landau symbol $O_E$ depends on the realization of the random variable $E$. Since $E^2 < \infty$ almost surely, the bound above also implies that 
\begin{align*}
e^{-\frac{E}{n}} = 1- \frac{E}{n} + O \left( \frac{1}{n^2} \right)
\end{align*}
almost surely. Therefore, we have that 
\begin{align*}
\frac{U^{1/n}}{\Vert G \Vert _p} \left\Vert G -   G' \right\Vert _2 & \overset{d}{=} \frac{\left( 1- \frac{E}{n} + O \left( \frac{1}{n^2} \right)  \right)}{\Vert G \Vert _p} \left\Vert G -   G' \right\Vert _2
 \\
 & \overset{d}{\longrightarrow} \frac{1}{\Vert G \Vert _p} \left\Vert G -   G' \right\Vert _2 .
\end{align*}

This last expression coincides with the case where $X^{(n)}, Y^{(n)}$ are uniformly distributed on the boundary of $B_p^n$. Thus, the Theorem follows.
\end{proof}

\subsection{CLT for the cube}
\label{Subsection:CLTCube}

Let $X^{(n)},Y^{(n)}$ be independent random vectors uniformly distributed on $B_\infty ^n$. Then $X^{(n)} \overset{d}{=} (u_1,...,u_n)$ and $Y^{(n)} \overset{d}{=} (u'_1,...,u'_n)$ where $u_i, u'_i$ are independent random variables uniformly distributed on the interval $[-1,1]$. Thus, the proof of the CLT for this case is a straightforward computation.

Let us first prove that the case where $X^{(n)},Y^{(n)}$ are uniformly distributed on the cube, is asymptotically distributed as the case where $X^{(n)},Y^{(n)}$ are uniformly distributed on the boundary of the cube.

\begin{proof}[proof of Theorem \ref{Thm:CLTforCube}]
Let $X^{(n)},Y^{(n)}$ be random vectors uniformly distributed on $B_\infty ^n$. Then $X^{(n)} \overset{d}{=} (u_1,...,u_n)$ and $Y^{(n)} \overset{d}{=} (u'_1,...,u'_n)$ where $u_i, u'_i$ are independent random variables uniformly distributed on the interval $[-1,1]$.

Note that
\begin{align*}
n^{-1/2}\Vert X^{(n)} - Y^{(n)} \Vert _2 & = \left\Vert  (u_1,...,u_n) - (u_1',...,u_n') \right\Vert
 \\
 & = \left( \frac{1}{n} \sum _{i=1}^n \left| u_i - u_i' \right|^2 \right)^{1/2} .
\end{align*}
For each $n\in \N$, we take
\begin{align}
\label{SnCLTCube}
S_n := \frac{1}{n} \sum _{i=1}^n \left| u_i - u_i' \right|^2.
\end{align}
By the classical CLT, it is clear that $\sqrt{n}\left( S_n - \mu \right) \to N (0 , \sigma ^2)$, where 
\begin{align*}
\mu := \E[ |u_1 - u_1' |^2 ] = \frac{2}{3}
\end{align*}
and
\begin{align*}
\sigma^2 := \var \left[ |u_1-u_1'|^2 \right] = \E\left[ |u_1-u_1'|^4 \right] - \left( \E\left[ |u_1-u_1'|^2 \right]\right)^2 = 
\frac{2^4}{15} - \left(\frac{2}{3}\right)^2 = \frac{28}{45}.
\end{align*}
Thus, taking the function $f(x) = \sqrt{x}$, we have that
\begin{align*}
n^{-1/2}\Vert X^{(n)} - Y^{(n)} \Vert _2 & = f\left( S_n  \right).
\end{align*}
Applying the Delta method (\ref{DeltaMethod}), we have that
\begin{align*}
\sqrt{n} \left( n^{-1/2}\Vert X^{(n)} - Y^{(n)} \Vert _2 - f (\mu ) \right)  \overset{d}{\to} N \left( 0 , \sigma ^2 (f'(\mu))^2  \right) = N\left( 0 , \frac{7}{30} \right).
\end{align*}

Now, let $ \tilde{X}^{(n)}, \tilde{Y}^{(n)}$ be random vectors uniformly distributed on $\partial B_\infty ^n$. Note that any point on the boundary of the hypercube has one coordinate either $1$ or $-1$, and the rest of coordinates are uniform random variables on the interval $[-1,1]$. Therefore, we take $X^{(n)} = (u_1, ... , u_n)$ and $Y^{(n)} = (u'_1, ... , u'_n)$ as before, and we define $\tilde{X}^{(n)} = (\tilde{u}_1, ... , \tilde{u}_n)$ as
\[ 
\tilde{u}_i = 
 \begin{cases}
  u_i, & \text{if }i \neq K_X \\
  S_X & \text{if }i=K_X \\
 \end{cases}
\]
where $K_X \sim \text{Unif}(\lbrace 1 , ... , n \rbrace)$ and $S_X \sim \text{Unif}(\lbrace -1 , 1 \rbrace)$, and $\tilde{Y}^{(n)} = (\tilde{u'}_1, ... , \tilde{u'}_n)$ as
\[ 
\tilde{u'}_i = 
 \begin{cases}
  u'_i, & \text{if }i \neq K_Y \\
  S_Y & \text{if }i=K_Y \\
 \end{cases}
\]
where $K_Y \sim \text{Unif}(\lbrace 1 , ... , n \rbrace)$ and $S_Y \sim \text{Unif}(\lbrace -1 , 1 \rbrace)$.
For each $n\in \N$, we take
\begin{align*}
S_n := \frac{1}{n} \sum _{i=1} ^n |u_i - u'_i |^2, \text{ and }  \tilde{S}_n := \frac{1}{n} \sum _{i=1} ^n |\tilde{u}_i - \tilde{u'}_i |^2 .
\end{align*}
Note that $S_n$ differs from $\tilde{S_n}$ in at most two summands, for which
\begin{align*}
\left| (u_i - u'_i )^2 - (\tilde{u}_i - \tilde{u'}_i )^2 \right| \leq 8.
\end{align*}
since $|u_i|, |u'_i|, |\tilde{u}_i|, |\tilde{u'}_i| \leq 1$, for any $i\in \lbrace 1, ... , n\rbrace$. Then, it follows that
\begin{align}
\label{Ineq:SnandSnTilde}
|S_n - \tilde{S}_n | = \left| \frac{1}{n} \sum _{i=1}^n \left( (u_i - u'_i )^2 - (\tilde{u}_i - \tilde{u'}_i )^2  \right) \right| \leq 2 \frac{8}{n} = \frac{16}{n}
\end{align}
almost surely. Note that, 
\begin{align*}
\left| n^{-1/2} \Vert X^{(n)} - Y^{(n)} \Vert _2 - n^{-1/2} \Vert \tilde{X}^{(n)} - \tilde{Y}^{(n)} \Vert _2  \right| & = \left| \sqrt{S_n} - \sqrt{\tilde{S}_n} \right|
 \\
  & = \frac{\left| S_n - \tilde{S}_n \right|}{ \sqrt{S_n} + \sqrt{\tilde{S}_n} }
\end{align*}
almost surely, since $S_n, \tilde{S}_n >0 $ with probability $1$. 
Therefore, since $S_n \overset{a.s.}{\to} \E [S_n] >0$ and $\tilde{S_n} \overset{a.s.}{\to} \E [S_n]$, there exists some $c>0$ such that $ \sqrt{S_n} + \sqrt{\tilde{S}_n} > c$ almost surely, and using \eqref{Ineq:SnandSnTilde}, we obtain that
\begin{align*}
\left| n^{-1/2} \Vert X^{(n)} - Y^{(n)} \Vert _2 - n^{-1/2} \Vert \tilde{X}^{(n)} - \tilde{Y}^{(n)} \Vert _2  \right| 
 \leq \frac{16/n}{c} \overset{a.s.}{\to} 0
\end{align*}
which implies that
\begin{align*}
n^{-1/2} \Vert X^{(n)} - Y^{(n)} \Vert _2 \overset{d}{\sim} n^{-1/2} \Vert \tilde{X}^{(n)} - \tilde{Y}^{(n)} \Vert _2.
\end{align*}
Therefore, $n^{-1/2} \Vert \tilde{X}^{(n)} - \tilde{Y}^{(n)} \Vert _2$ satisfies the same CLT as $n^{-1/2} \Vert X^{(n)} - Y^{(n)} \Vert _2$.
\end{proof}

\section{Large Deviation Principles}
\label{section5}

In this section, we are going to prove the Large Deviation Principles results. We divide the section into two subsections. The first one is devoted to random vectors uniformly distributed on $l_p^n$-balls $(2 \leq p < \infty)$ in order to prove Theorems \ref{ThmLDPBoundarypgeq2} and \ref{ThmLDPpgeq2}, and the second one is devoted to the case of the cube (Theorem \ref{ThmLDPBoundaryCube}).

\subsection{LDP for $l_p^n-$balls, $p\geq 2$}

As in the previous section, instead of proving that an LDP is satisfied by $n^{1/p - 1/2} \Vert X^{(n)} - Y^{(n)} \Vert _2$, we show that it is satisfied by
\begin{align*}
W_n : = n^{1/p - 1/2} \frac{1}{\left(\sum _{i=1}^n \left| g_i  \right|^p \right)^{1/p}} \left(  \sum_{i=1}^n \left| g_i - g_i' \right|^2 \right)^{1/2},
\end{align*} 
where $(g_i)_{i=1} ^n$ and $(g'_i)_{i=1} ^n$ are independent $p$-generalized Gaussian random variables.
For that, we need to first prove that these two expressions are exponentially equivalent. In that case, by Theorem \ref{LemExponentialEquivalence}, both will satisfy the same LDP with speed $n$ and the same rate function.

Let us first see that the two expressions are exponentially equivalent.
\begin{lemma}
\label{LemExpEquivalentpgeq2Boundary}
Let $p\geq 2$, and let $G=(g_1,...,g_n), G'=(g_1',...,g_n')$ be independent generalized $p-$gaussian vectors, and let $U,U'$ be independent uniform random variables on the interval $[0,1]$, independent of $G$ and $G'$. Let
\begin{align*}
A_n = \frac{\Vert G \Vert _p}{\Vert G' \Vert _p} \; \text{ or } \; A_n = \frac{U^{1/n}}{U'^{1/n}} \frac{\Vert G \Vert _p}{\Vert G' \Vert _p}.
\end{align*} 
Then,
\begin{align*}
\frac{n^{1/p - 1/2}}{\Vert G \Vert _p} \left\Vert  G - A_n G' \right\Vert_2
\end{align*}
is exponentially equivalent to
\begin{align*}
\frac{n^{1/p - 1/2}}{\Vert G \Vert _p} \left\Vert  G - G' \right\Vert_2.
\end{align*}
Equivalently, for every $\delta >0$, 
\begin{align*}
\limsup _{n \to \infty}  \frac{1}{n} \log \mathbb{P} \left[ \left|  \frac{n^{1/p - 1/2}}{\Vert G \Vert _p} \left\Vert  G - A_n G' \right\Vert_2 - 
\frac{n^{1/p - 1/2}}{\Vert G \Vert _p} \left\Vert  G - G' \right\Vert_2
 \right|  > \delta \right] = -\infty .
\end{align*}
\end{lemma}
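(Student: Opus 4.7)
The plan is to reduce, via the mean value theorem bound already exploited in Lemma \ref{LemEquivalentCLTBoundarypball}, the exponential equivalence to the decay of three explicit rare events, each of which can then be handled by Cram\'er's theorem or by a direct exponential estimate.

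First, applying the MVT argument to $f(x)=\Vert G-xG'\Vert_2$ evaluated at $x=1$ and $x=A_n$ gives, almost surely,
\begin{align*}
\left|\frac{n^{1/p-1/2}}{\Vert G\Vert_p}\Vert G-A_nG'\Vert_2 - \frac{n^{1/p-1/2}}{\Vert G\Vert_p}\Vert G-G'\Vert_2\right| \leq \frac{n^{1/p-1/2}}{\Vert G\Vert_p}\,|1-A_n|\,\Vert G'\Vert_2.
\end{align*}
Hence it suffices to show that for every $\delta>0$,
\begin{align*}
\limsup_{n\to\infty}\frac{1}{n}\log\mathbb{P}\!\left(\frac{n^{1/p-1/2}}{\Vert G\Vert_p}\,|1-A_n|\,\Vert G'\Vert_2 > \delta\right) = -\infty.
\end{align*}

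Second, fix constants $0<a<M_p(p)^{1/p}$ and $b>\sqrt{M_p(2)}$, and pick $\eta=\eta(\delta)>0$ small enough that $b\eta/a\leq\delta$. On the event
\begin{align*}
E_n := \{n^{-1/p}\Vert G\Vert_p \geq a\}\cap\{n^{-1/2}\Vert G'\Vert_2 \leq b\}\cap\{|1-A_n|\leq \eta\},
\end{align*}
the left-hand side above is bounded by $b\eta/a\leq\delta$, so by the union bound
\begin{align*}
\mathbb{P}\!\left(\tfrac{n^{1/p-1/2}}{\Vert G\Vert_p}|1-A_n|\Vert G'\Vert_2 > \delta\right) &\leq \mathbb{P}(n^{-1/p}\Vert G\Vert_p < a) + \mathbb{P}(n^{-1/2}\Vert G'\Vert_2 > b) \\
 &\quad + \mathbb{P}(|1-A_n|>\eta),
\end{align*}
and it remains to prove that each summand decays at exponential rate $n$.

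Third, the first two probabilities are controlled by Cram\'er's theorem applied to the i.i.d.\ sequences $(|g_i|^p)_i$ and $(|g_i'|^2)_i$: for $p\geq 2$ the density $C_p^{-1}e^{-|x|^p/p}$ makes $\mathbb{E}\,e^{t|g|^p}$ finite for $|t|<1/p$, and since $|x|^p$ dominates $|x|^2$ for $p\geq 2$, $\mathbb{E}\,e^{t|g|^2}$ is finite at least for $|t|<1/2$; choosing $a$ and $b$ strictly inside the typical range yields strictly positive values of the Cram\'er rate functions and hence a bound of the form $e^{-cn}$. For the third summand I split: the inclusion
\begin{align*}
\left\{\left|1-\tfrac{\Vert G\Vert_p}{\Vert G'\Vert_p}\right|>\tfrac{\eta}{2}\right\}\subset\{|n^{-1/p}\Vert G\Vert_p - M_p(p)^{1/p}|>\eta'\}\cup\{|n^{-1/p}\Vert G'\Vert_p - M_p(p)^{1/p}|>\eta'\},
\end{align*}
valid for some $\eta'=\eta'(\eta)>0$, again reduces this to Cram\'er via the sequences $(|g_i|^p)_i$ and $(|g_i'|^p)_i$; in the second case $A_n=(U/U')^{1/n}\Vert G\Vert_p/\Vert G'\Vert_p$ I additionally use $U^{1/n}\overset{d}{=}e^{-E/n}$ with $E$ a standard exponential, whence $\mathbb{P}(|e^{-E/n}-1|>\eta/4)\leq \mathbb{P}(E>c_\eta n)$ for a suitable $c_\eta>0$, decaying exponentially in $n$.

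The main technical point is verifying the exponential moment conditions that make Cram\'er's theorem applicable to $(|g_i|^p)_i$ and $(|g_i'|^2)_i$: the hypothesis $p\geq 2$ is precisely what guarantees the integrability of $e^{t|g|^2}$ in a neighborhood of the origin (as $|x|^p$ then absorbs $t|x|^2$ at infinity), and this is the underlying reason the LDP results of the paper are restricted to $p\geq 2$. Once these moment estimates are in hand, the three-term union bound above finishes the argument.
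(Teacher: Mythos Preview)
Your argument follows the paper's proof essentially line for line: the mean value theorem bound from Lemma~\ref{LemEquivalentCLTBoundarypball}, a three-term union bound controlling $n^{1/p}/\|G\|_p$, $|1-A_n|$, and $n^{-1/2}\|G'\|_2$, and then Cram\'er's theorem (together with direct tail bounds on $U^{1/n}$) for each piece. The decomposition, the choice of thresholds, and the treatment of the two cases for $A_n$ all match the paper's.

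There is, however, a genuine gap---one that the paper's own proof shares. Writing $X_n$ and $Y_n$ for the two quantities in the lemma, establishing that each of the three probabilities is at most $e^{-cn}$ for some \emph{fixed} $c>0$ only yields
\[
\limsup_{n\to\infty}\frac{1}{n}\log\mathbb{P}\bigl[\,|X_n-Y_n|>\delta\,\bigr]\le -c,
\]
not $-\infty$ as exponential equivalence requires. The trade-off in the union bound cannot be circumvented: pushing the Cram\'er rates for the two outer terms to infinity forces your threshold $\eta$ on $|1-A_n|$ toward zero, which drives the rate for $\mathbb{P}[|1-A_n|>\eta]$ to zero as well. In fact the conclusion appears to be false as stated: for $p=2$, on the event where $\|G\|_2^2/n\approx 1$, $\|G'\|_2^2/n\approx 2$ and $n^{-1}\langle G,G'\rangle\approx 0$---an event of probability at least $e^{-Cn}$ for some finite $C$ by the lower bound in Cram\'er's theorem---one computes $X_n\approx\sqrt{2}$ but $Y_n\approx\sqrt{3}$, so the difference is bounded below by a positive constant. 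Thus neither your argument nor the paper's can close this gap.
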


\begin{proof}

Let $f(x)=\|G-xG'\|_2$. Arguing as in the proof of Lemma \ref{LemEquivalentCLTBoundarypball}, we obtain that
\begin{align*}
\left| \Vert G - G' \Vert_2 - \Vert G - A_n G' \Vert_2 \right|  \leq  | 1- A_n| \Vert G' \Vert_2
\end{align*}
almost surely.
Now, note that for any $\delta >0$ and $\epsilon \in (0,1)$, we have that
\begin{align*}
& \mathbb{P} \left[ \left|  \frac{n^{1/p - 1/2}}{\Vert G \Vert _p} \left\Vert  G - A_n G' \right\Vert_2 - 
\frac{n^{1/p - 1/2}}{\Vert G \Vert _p} \left\Vert  G - G' \right\Vert_2
 \right|  > \delta \right] 
 \\
 = &  \mathbb{P} \left[  \frac{n^{1/p - 1/2}}{\Vert G \Vert _p} 
 \left|  \left\Vert  G - A_n G' \right\Vert_2 - 
 \left\Vert  G - G' \right\Vert_2
 \right|  > \delta \right] 
 \\
\leq &  \mathbb{P} \left[ \left|  \frac{n^{1/p}}{\Vert G \Vert _p}
 | 1- A_n | \frac{  \left\Vert G'  \right\Vert _2   }{n^{1/2}} 
 \right|  > \delta \right]
 \\
\leq  &  \mathbb{P} \left[   \frac{n^{1/p}}{\Vert G \Vert _p}   > \alpha   \right]
 +  \mathbb{P} \left[ \left|  1-A_n \right| > \beta   \right]
 + \mathbb{P} \left[   \frac{  \left\Vert G'  \right\Vert_2 }{n^{1/2}}   \geq  \gamma   \right]
 \\
= &  \mathbb{P} \left[   \frac{\Vert G \Vert _p ^p}{n}   < \frac{1}{\alpha^p}   \right]
 +  \mathbb{P} \left[ \left|  1-A_n \right| > \beta   \right]
 + \mathbb{P} \left[   \frac{  \left\Vert G'  \right\Vert_2 ^2   }{n}   \geq \gamma^2   \right] .
\end{align*}
where $\alpha = 1/ M_p(p)^{1/p} + \epsilon$, $\gamma = (M_p(2) + \epsilon)^{1/2}$ and $\beta = \delta / ( \alpha \gamma ) $. In particular, noticing how $\alpha \gamma $ increases with $\epsilon$, we can take $\epsilon>0$ big enough such that $\beta \in (0, 1/2]$.

\quad

\quad

Let us bound these three terms. For the first term, by Cramer's theorem,
\begin{align*}
\mathbb{P} \left[ \left|  \frac{\Vert G \Vert _p ^p}{n} \right|  < \frac{1}{\alpha ^p}   \right]  & = \mathbb{P} \left[   \frac{1}{n} \sum _{i=1}^n \left| g_i  \right|^p   <  \frac{M_p(p)}{\left( 1 + \epsilon M_p(p)^{1/p} \right)^p}   \right] \leq e^{-c_1 n}
\end{align*}
for some $c_1>0$.

For the third term, by Cramer's theorem, 
\begin{align*}
\mathbb{P} \left[ \left|  \frac{  \left\Vert G'  \right\Vert_2 ^2   }{n} \right|  \geq  \gamma  ^2 \right] = \mathbb{P} \left[ \left|  \frac{  \left\Vert G'  \right\Vert_2 ^2   }{n} \right|  \geq  M_p(2) + \epsilon   \right] \leq e^{-c_2 n}
\end{align*}
for some $c_2 >0$.

For the second term, we differenciate two cases of $A_n$. If $A_n = \frac{\Vert G \Vert _p}{\Vert G' \Vert _p}$, we have that for every $\delta >0$, and taking a suitable $\epsilon>0$ such that $\beta \in (0, 1/2]$,
\begin{align*}
& \mathbb{P} \left( \left| \frac{\left(\sum _{i=1}^n \left| g_i  \right|^p  \right)^{1/p} }{\left(\sum _{i=1}^n \left| g_i'  \right|^p \right)^{1/p}} - 1 \right| > \beta \right)  
 \\
 & \leq \mathbb{P} \left(  \frac{ \left( \frac{1}{n} \sum _{i=1}^n \left| g_i  \right|^p  \right)^{1/p} }{\left(\frac{1}{n}\sum _{i=1}^n \left| g_i'  \right|^p \right)^{1/p}} < 1- \beta \right) + 
 \mathbb{P} \left(  \frac{ \left( \frac{1}{n} \sum _{i=1}^n \left| g_i  \right|^p  \right)^{1/p} }{\left(\frac{1}{n}\sum _{i=1}^n \left| g_i'  \right|^p \right)^{1/p}} > 1+ \beta \right)
  \\
  & \leq \mathbb{P} \left( \frac{1}{n} \sum _{i=1}^n \left| g_i  \right|^p  < (1- \beta)^{p/2} M_p(p) \right)
  + \mathbb{P} \left( \frac{1}{n} \sum _{i=1}^n \left| g_i'  \right|^p  > (1- \beta)^{-p/2} M_p(p) \right)
  \\
  & + \mathbb{P} \left( \frac{1}{n} \sum _{i=1}^n \left| g_i  \right|^p  > (1+ \beta)^{p/2} M_p(p) \right)
  + \mathbb{P} \left( \frac{1}{n} \sum _{i=1}^n \left| g_i'  \right|^p  < (1+ \beta)^{-p/2} M_p(p) \right).
\end{align*} 
By Cramer's theorem, the four terms decay exponentially with speed $n$.

In the second case, if $A_n = \frac{U^{1/n}}{U'^{1/n}} \frac{\Vert G \Vert _p}{\Vert G' \Vert _p}$, note that for every $\delta >0$, and taking a suitable $\epsilon>0$ such that $\beta \in (0, 1/2]$,
\begin{align*}
& \mathbb{P} \left( \left| \frac{U^{1/n}}{U'^{1/n}} \frac{\left(\sum _{i=1}^n \left| g_i  \right|^p  \right)^{1/p} }{\left(\sum _{i=1}^n \left| g_i'  \right|^p \right)^{1/p}} - 1 \right| > \beta \right)  
 \\
 & \leq \mathbb{P} \left( \frac{U^{1/n}}{U'^{1/n}} \frac{ \left( \frac{1}{n} \sum _{i=1}^n \left| g_i  \right|^p  \right)^{1/p} }{\left(\frac{1}{n}\sum _{i=1}^n \left| g_i'  \right|^p \right)^{1/p}} < 1- \beta \right) + 
 \mathbb{P} \left(  \frac{U^{1/n}}{U'^{1/n}} \frac{ \left( \frac{1}{n} \sum _{i=1}^n \left| g_i  \right|^p  \right)^{1/p} }{\left(\frac{1}{n}\sum _{i=1}^n \left| g_i'  \right|^p \right)^{1/p}} > 1+ \beta \right)
  \\
  & \leq \mathbb{P} \left( \frac{1}{n} \sum _{i=1}^n \left| g_i  \right|^p  < (1- \beta)^{p/4} M_p(p) \right)
  + \mathbb{P} \left( \frac{1}{n} \sum _{i=1}^n \left| g_i'  \right|^p  > (1- \beta)^{-p/4} M_p(p) \right)
  \\
  & + \mathbb{P} \left( \frac{1}{n} \sum _{i=1}^n \left| g_i  \right|^p  > (1+ \beta)^{p/4} M_p(p) \right)
  + \mathbb{P} \left( \frac{1}{n} \sum _{i=1}^n \left| g_i'  \right|^p  < (1+ \beta)^{-p/4} M_p(p) \right)
 \\
  & + \mathbb{P} \left( U^{1/n}  < (1 - \beta)^{p/4} \right) + 
   \mathbb{P} \left( U'^{1/n}  > (1 - \beta)^{-p/4} \right)
  \\
   & + \mathbb{P} \left( U^{1/n}  > (1 + \beta)^{p/4} \right) + 
   \mathbb{P} \left( U'^{1/n}  < (1 + \beta)^{-p/4} \right).
\end{align*} 
Applying Cramer's theorem as before, the first four terms decay exponentially with speed $n$. Since
\begin{align*}
\mathbb{P} \left( U^{1/n} < x \right) = e^{-n |\log (x) |}, \text{ if }x \in (0,1)
\end{align*}
and
\begin{align*}
\mathbb{P} \left( U^{1/n} > x \right) = 0, \text{ if }x \geq 1,
\end{align*}
the remaining four terms either decay exponentially with speed $n$ or their value is $0$.
\end{proof}

Now, we are ready to prove both LDP's.
\begin{proof}[Proof of Theorem \ref{ThmLDPBoundarypgeq2}]
As before, for $n \in \N$, we recall the random vector 
\begin{align*}
S_n = \left( S_{1,n} , S_{2,n} \right) =
 \left( \frac{1}{n} \sum _{i=1}^n  \left| g_i - g_i' \right|^2 , 
 \frac{1}{n} \sum _{i=1}^n  \left| g_i  \right|^p \right).
\end{align*}
Then, for $t = (t_1,t_2) \in \R^2$,
\begin{align*}
\Lambda (t_1,t_2) & = \log \E \left[ e^{\langle t,( |g_1 - g_1'|^2 ,|g_1|^p ) \rangle} \right]
 \\
  & = \log \int _{-\infty} ^{\infty} \int _{-\infty} ^{\infty} e^{t_1 |x-y|^2 + t_2|y|^p    
  - \frac{|x|^p}{p} - \frac{|y|^p}{p} } \frac{1}{4p^{2/p} \left( \Gamma(1 + 1/p) \right)^2} dx dy .
\end{align*}

Note that if $t_1 \geq 0$, using the inequality $|x-y|^2 \leq 2(|x|^2 + |y|^2)$, we obtain that
\begin{align*}
\Lambda (t_1,t_2) & = \log \int _{-\infty} ^{\infty} \int _{-\infty} ^{\infty} e^{t_1 |x-y|^2 + t_2|y|^p    
  - \frac{|x|^p}{p} - \frac{|y|^p}{p} } \frac{1}{4p^{2/p} \left( \Gamma(1 + 1/p) \right)^2} dx dy
  \\
  & \leq \log \int _{-\infty} ^{\infty} \int _{-\infty} ^{\infty} e^{
  2t_1 |x|^2 + 2t_1|y|^2 + t_2|y|^p    
  - \frac{|x|^p}{p} - \frac{|y|^p}{p} } \frac{1}{4p^{2/p} \left( \Gamma(1 + 1/p) \right)^2} dx dy
  \\
  & = \log \int _{-\infty} ^{\infty}  e^{
  2t_1 |x|^2 - \frac{|x|^p}{p}    } dx
   \int _{-\infty} ^{\infty} e^{2t_1|y|^2 + \left(t_2 - \frac{1}{p} \right)|y|^p   } 
    \frac{1}{4p^{2/p} \left( \Gamma(1 + 1/p) \right)^2} dy,
\end{align*}
and if $t_1 \leq 0$, then
\begin{align*}
\Lambda (t_1,t_2) & = \log \int _{-\infty} ^{\infty} \int _{-\infty} ^{\infty} e^{t_1 |x-y|^2 + t_2|y|^p    
  - \frac{|x|^p}{p} - \frac{|y|^p}{p} } \frac{1}{4p^{2/p} \left( \Gamma(1 + 1/p) \right)^2} dx dy
  \\
  & \leq \log \int _{-\infty} ^{\infty} \int _{-\infty} ^{\infty} e^{
    t_2|y|^p    
  - \frac{|x|^p}{p} - \frac{|y|^p}{p} } \frac{1}{4p^{2/p} \left( \Gamma(1 + 1/p) \right)^2} dx dy
  \\
  & = \log \int _{-\infty} ^{\infty}  e^{
   - \frac{|x|^p}{p}    } dx
   \int _{-\infty} ^{\infty} e^{  \left(t_2 - \frac{1}{p} \right)|y|^p   } 
    \frac{1}{4p^{2/p} \left( \Gamma(1 + 1/p) \right)^2} dy.
\end{align*}
Notice that if $p > 2$, its domain contains the set $(\R \times (-\infty,1/p)$, where the origin is an interior point. And if $p = 2$, then the domain contains the set $\lbrace (t_1 , t_2) \in \mathbb{R}^2 \; : \; 2t_1 < 1/p , 2t_1 + t_2 < 1/p , t_2 < 1/p \rbrace$, where again the origin is an interior point. Therefore, by Cramer's theorem the sequence $(S_n)_{n\in\N}$ satisfies an LDP in $\R^2$ with speed $n$ and good rate function $\Lambda ^*$.

Next, we define the continuous function $f(x,y) = x^{1/2} y^{-1/p}$.
As stated before, for each $n\in \N$, $W_n \overset{d}{=} f(S_n)$. Thus, by the contraction principle, the random sequence $W = (W_n)_{n \in \N}$ satisfies an LDP on $\R$ with speed $n$ and good rate function 

\[   
I_W(z) = 
     \begin{cases}
       \displaystyle{\inf_{ \substack{x \geq 0, y> 0 \\ x^{1/2}y^{-1/p} = z}} \Lambda ^* (x,y)}  &\quad\text{: }z \geq 0\\
       + \infty &\quad\text{: }z < 0 . \\ 
     \end{cases}
\]
\end{proof}

\begin{proof}[Proof of Theorem \ref{ThmLDPpgeq2}]
In the case of $X^{(n)}, Y^{(n)} \sim \text{Unif}(B_p^n)$, then by Lemma \ref{LemExpEquivalentpgeq2Boundary}, $n^{1/p - 1/2} \Vert X^{(n)} - Y^{(n)} \Vert _2$ satisfy an LDP with the same speed and same rate function as
\begin{align*}
 n^{1/p - 1/2} \frac{U^{1/n}}{\left(\sum _{i=1}^n \left| g_i  \right|^p \right)^{1/p}} \left(  \sum_{i=1}^n \left| g_i - g_i' \right|^2 \right)^{1/2} .
\end{align*} 
Then, let us define the random variables $V_n:=(U^{1/n} , W_n)$, where $W_n$ is defined as in Theorem \ref{ThmLDPBoundarypgeq2}. As proved in Lemma 3.3 of \cite{GKR}, $U:=(U^{1/n})_{n \in \N}$ satisfy an LDP on $\R$ with speed $n$ and rate function 
\[   
I_U(z) = 
     \begin{cases}
       - \log (z) &\quad\text{: }z \in (0, 1]\\
       + \infty &\quad\text{: otherwise}. \\ 
     \end{cases}
\]
As $U^{1/n}$ and $W_n$ are independent, the sequence $V:=(V_n)_{n \in \N}$ satisfies an LDP on $\R^2$ with speed $n$ and good rate function
\begin{align*}
I_V(z_1,z_2) := I_U(z_1) + I_W(z_2), \quad (z_1,z_2)\in \R^2.
\end{align*}
Finally, applying once again the contraction principle, now to the function $f(x,y) = xy$, we conclude that the sequence of random variables 
\begin{align*}
n^{1/p - 1/2}\Vert X^{(n)}-Y^{(n)}\Vert _2 \overset{d}{=} U^{1/n} W_n 
\end{align*}
satisfies and LDP on $\R$ with speed $n$ and good rate function 
\[   
I_{\Vert X-Y \Vert _2}(z) = \inf _ {z=z_1 z_2} I_V(z_1,z_2) = 
     \begin{cases}
       \displaystyle{\inf_{ \substack{z_1 \geq 0, z_2 \geq 0 \\ z=z_1z_2}} I_V (z_1,z_2)}  &\quad\text{: }z \geq 0\\
       + \infty &\quad\text{: otherwise}. \\ 
     \end{cases}
\]
\end{proof}

\subsection{LDP for $\infty$-balls }

In this section, we provide a proof of the LDP for the cube.

\begin{proof}[Proof of Theorem \ref{ThmLDPBoundaryCube}]
Using the same notation as in Section \ref{Subsection:CLTCube}, let $X_i = |u_i-u_i'|^2$ and $S_n = \frac{1}{n} \sum_{i=1}^n X_i$. By Cramer's theorem, we have that the sequence $(S_n)_{n\in\N}$ satisfies an LDP on $\R$ with speed $n$ and good rate function $\Lambda^*$, where
\begin{align*}
\Lambda (t) = \log E \left[ e^{X_i t} \right] = \log \left[ 2 \int _{0} ^2 e^{t x^2 } \left( 1- \frac{x}{2} \right) dx \right].
\end{align*}
Now, using the contraction principle with the function $f(x) = \sqrt{x}$, one can see that $f(S_n) \overset{d}{=} \text{dist}(X^{(n)} , Y^{(n)}) $ satisfies a LDP with the same speed and good rate function $\Lambda^* \circ f^{-1}$. That is,
\[   
I_{\text{dist}(X^{(n)} , Y^{(n)})}(z)  = 
     \begin{cases}
       \displaystyle{\inf_{ \substack{x \geq 0 \\ z = \sqrt{x}}} \Lambda^* (x)}  &\quad\text{: }z \geq 0\\
       + \infty &\quad\text{: otherwise}. \\ 
     \end{cases}
\]
\end{proof}

\end{document}